\theoremstyle{plain}
\newtheorem{theorem}{Theorem}
\newtheorem{lemma}{Lemma}
\theoremstyle{definition}
\theoremstyle{example}
\theoremstyle{remark}
\numberwithin{equation}{section}
\begin{document}

\begin{center}
{\bf\Large Random induced subgraphs of Cayley graphs induced by transpositions}\\
\vspace{15pt} Emma Yu Jin\footnote{The work of this author has been
supported by the Alexander von Humboldt Foundation by a postdoctoral
research fellowship.} and  Christian M. Reidys\footnote{Author to
whom correspondence should be addressed.}
\end{center}

\begin{minipage}[t]{0.4\linewidth}
\centering
$^1$Department of Computer Science\\
University of Kaiserslautern,\\
67663 Kaiserslautern, Germany\\
Email: jin@cs.uni-kl.de
\end{minipage}
\begin{minipage}[t]{0.6\linewidth}
\centering
$^2$Department of Mathematics and Computer Science\\
University of Southern Denmark,\\
Campusvej 55, DK-5230 Odense M, Denmark\\
Email: duck@imada.sdu.dk
\end{minipage}

%\email{duck@santafe.edu}
\thanks{}
\keywords{random graph, permutation, transposition, giant component,
vertex boundary} \subjclass[2000]{05A16}
\date{September, 2009}
\begin{abstract}
In this paper we study random induced subgraphs of Cayley graphs of
the symmetric group induced by an arbitrary minimal generating set of
transpositions. A random induced subgraph of this Cayley graph is
obtained by selecting permutations with independent probability,
$\lambda_n$. Our main result is that for any minimal generating set of
transpositions, for probabilities $\lambda_n=\frac{1+\epsilon_n}{n-1}$
where $n^{-\frac{1}{3}+\delta}\le \epsilon_n<1$ and $\delta>0$,
a random induced subgraph has a.s.~a unique largest component of size
$ (1+o(1))\cdot x(\epsilon_n)\cdot\frac{1+\epsilon_n}{n-1}\cdot n!$.
Here $x(\epsilon_n)$ is the survival probability of Poisson branching
process with parameter $\lambda=1+\epsilon_n$.
\end{abstract}
%%%
%%%
%%%%%%%%%%%%%%%%%%%%%%%%%%%%%%%%%%%%%%%%%%%%%%%%%%%%%%%%%%%%%%%%%%%%%%%%
%%%
\section{Introduction}

One central problem arising in parallel computing is to determine an
optimal linkage of a given collection of processors. A particular
class of processor linkages with point-to-point communication links
are static interconnection networks. The latter are widely used for
message-passing architectures. A static interconnection network can
be represented as a graph. The binary $n$-cubes, $Q_{2}^{n}$,
\cite{Ajtai:82,Padmanabhan:89} are a particularly well-studied class
of interconnection networks
\cite{Chlebus:96,Esfahanian:93,Fan:05,Sharma:06}.

Akers {\it et al.} \cite{Akers:87} observed the deficiencies of
$n$-cubes as models for interconnection networks and proposed an
alternative: the Cayley graph of the permutation group induced by
the $(n-1)$ star-transpositions $(1\,i)$, which was denoted by
$\Gamma(S_n,P_n)$. Pak \cite{Pak:99}
studied minimal decompositions of a particular permutation via
star-transpositions and Irving {\it et al.} \cite{Irving} extended
his results. The star-graph $\Gamma(S_n,P_n)$ is in many aspects
superior to $n$-cubes \cite{Ajtai:82,Padmanabhan:89}. Some
properties of star-graphs studied in
\cite{Hsieh:00,Hsieh:01,Hsieh:012,Hsieh:05,Jwo:91,Li:04} were
cycle-embeddings and path-embeddings. Diameter and fault diameter of
star-graphs were computed by Akers {\it et al.}
\cite{Akers:87,Latifi:93,Rouskov:96} and Lin {\it et al.}
\cite{Lin:08} analyzed diagnosability. An alternative to $n$-cubes
as interconnection networks are the bubble-sort graphs
\cite{Akers:89}, studied by Tchuente \cite{Tchuente:82}. The
bubble-sort graph is the Cayley graph of the permutation group
induced by all $n-1$ canonical transpositions $(i\,\,i+1)$, denoted
by $\Gamma(S_n,B_n)$.

Recently, Araki \cite{Araki:2006} brought the attention to a
generalization of star- and bubble-sort graphs, the Cayley graph
generated by all transpositions \cite{Cayley}. The latter has direct connections
to a problem of interest in computational biology: the evolutionary
distances between species based on their genome order in the Cayley
graph of signed permutations generated by reversals. A reversal is a
special permutation that acts by flipping the order as well as the
signs of a segment of genes. Hannenhalli and Pevzner
\cite{Pevzner:95} presented an algorithm computing minimal number of
reversals needed to transform one sequence of distinct genes into a
given signed permutation. For distant genomes, however, it is
well-known, that the true evolutionary distance is generally much
greater than the shortest distance
\cite{Wang,Caprara,Pevzner,Berestycki:06}. In order to obtain a more
realistic estimate of the true evolutionary distance, the expected
reversal distance was shifted into focus. Its computation, however,
has proved to be hard and motivated models better
suited for computation. Point in case is the work of Eriksen {\it et
al.} \cite{Eriksen}, where the authors derive a closed formula for
the expected transposition distance and subsequently show how to use
it as an approximation of the expected reversal distance. Berestycki 
and Durrett \cite{Berestycki:07} studied the shortest distance of 
random walks over Cayley graphs generated by all transpositions and 
canonical transpositions, respectively, and compared the shortest distance with 
the expected distance \cite{Eriksen}.

The theory of random graphs was pioneered by Erd\"{o}s and R\'{e}nyi
in the late $1950$s \cite{Erdos:59,Erdos:60}, who analyzed the phase
transition of $G(n,p_n)$, the random graph containing $n$ vertices
in which an edge $\{i,j\}$ is selected with independent probability
$p_n$. For $p_n=\frac{c}{n}$ and $c<1$, the largest component in
$G(n,p_n)$ is a.s.~of size $O(\log n)$.
For $p_n=\frac{1+\theta\cdot n^{-\frac{1}{3}}}{n}$, where $\theta>0$,
a.s.~a largest component of size $O(n^{\frac{2}{3}})$ emerges.
For $p_n=\frac{c}{n}$ and $c>1$, we have a.s.~a unique largest
component of size $O(n)$ and all other components are smaller than
$O(\log n)$.
Erd\"{o}s and R\'{e}nyi's construction of the giant component
\cite{Erdos:59,Erdos:60} has motivated Lemma~\ref{L:minigene},
which assures the existence of certain subtrees of size $\lfloor
\frac{1}{4}n^{\frac{2}{3}}\rfloor$.
For a review of Erd\"{o}s-R\'{e}nyi random graph theory, see
Durrett \cite{Durrett:07} or van der Hofstad \cite{Hofstad:10}.

In this paper we study a subgraph of the Cayley graph generated by
all transpositions, the Cayley graph $\Gamma(S_n,T_n)$, where $T_n$
is a minimal generating set of transpositions.
Setting $T_n=P_n$ and $T_n=B_n$ we can recover the star- and the
bubble-sort graph as particular instances. We study structural properties
of $\Gamma(S_n,T_n)$ in terms of the random graph obtained by selecting
permutations with independent probability. The main result of this paper is
%%%
%%%%%%%%%%%%%%%%%%%%%%%%%%%%%%%%%%%%%%%%%%%%%%%%%%%%%%%%%
%%%
\begin{theorem}\label{T:main}
Let $\lambda_n=\frac{1+\epsilon_n}{n-1}$, where
$n^{-\frac{1}{3}+\delta}\le \epsilon_n<1$ and $\delta>0$. Let $T_n$
be a minimal generating set of transpositions and let
$\Gamma_n$ denote the random induced subgraph of
$\Gamma(S_n,T_n)$, obtained by independently selecting each
permutation with probability $\lambda_n$. Then $\Gamma_n$ has a.s.~a unique
giant component, $C_n^{(1)}$, whose size is given by
\begin{equation}
\vert C_n^{(1)}\vert=
(1+o(1))\cdot x(\epsilon_n)\cdot\frac{1+\epsilon_n}{n-1}\cdot n!,
\end{equation}
where $x(\epsilon_n)>0$ is the survival probability of a Poisson
branching process with parameter $\lambda=1+\epsilon_n$ and also the unique
positive root of $e^{-(1+\epsilon_n)y}=1-y$.
Particularly, if $n^{-\frac{1}{3}+\delta}
\le\epsilon_n=o(1)$, then we have $x(\epsilon_n)=(2+o(1))\epsilon_n$.
\end{theorem}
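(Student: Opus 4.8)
The plan is to carry out the classical programme for the birth of the giant component --- local exploration compared with a branching process, together with a global argument excluding ``medium-sized'' components --- inside the Cayley graph $\Gamma(S_n,T_n)$. Identifying $T_n$ with a spanning tree on $[n]$, the only features of $\Gamma(S_n,T_n)$ that enter are that it is $(n-1)$-regular on $n!$ vertices, that the number of cycles of any fixed length through a fixed vertex is at most polynomial in $n$, and that it satisfies the vertex-isoperimetric estimates established above, uniformly over the choice of $T_n$. Throughout, ``a.s.'' means with probability tending to $1$.

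\emph{Step 1 (branching process comparison).}
Fix the truncation level $\beta_n=n^{2}$; since $n^{-1/3+\delta}\le\epsilon_n<1$ we have $\beta_n\epsilon_n^{2}\to\infty$, $\beta_n^{2}\,n/n!\to0$, and $\beta_n\epsilon_n^{2}\gg\log(n!)$. For a selected vertex $v$, explore its component $C(v)$ in $\Gamma_n$ by breadth-first search, halting once $\beta_n$ vertices have been reached. As long as fewer than $\beta_n$ vertices are exposed, each processed vertex produces $\mathrm{Bin}(n-2,\lambda_n)$ new vertices, and the only departures from a Galton--Watson tree are revisits of already-exposed vertices and cycles closing inside the exposed set; the expected number of revisits is $O(\beta_n^{2}n/n!)=o(1)$ and the expected number of cycles met is $o(\beta_n)$, so a.s.\ $|C(v)|$ equals $(1+o(1))\,T$, where $T$ is the total progeny of a Galton--Watson process of offspring mean $(n-1)\lambda_n\bigl(1+O(1/n)\bigr)=1+\epsilon_n\bigl(1+o(1)\bigr)$, using $1/n=o(\epsilon_n)$. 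The standard estimates for barely supercritical branching processes --- valid uniformly on the given range because $\beta_n\epsilon_n^{2}\to\infty$ --- then yield
\[
\mathbb{P}\bigl(|C(v)|\ge\beta_n \;\big|\; v\in\Gamma_n\bigr)=\bigl(1+o(1)\bigr)\,\wp(\epsilon_n),
\]
where $\wp(\epsilon_n)$ is the survival probability of the limiting process, i.e.\ the positive root of $e^{-(1+\epsilon_n)y}=1-y$, and $\wp(\epsilon_n)=(2+o(1))\epsilon_n$ when $\epsilon_n\to0$. Summing over the $n!$ permutations, the number $Y$ of vertices lying in components of size $\ge\beta_n$ satisfies $\mathbb{E} Y=(1+o(1))\,\wp(\epsilon_n)\,\lambda_n\,n!$.

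\emph{Step 2 (concentration of $Y$).}
I would prove $Y=(1+o(1))\mathbb{E} Y$ a.s.\ by the second moment method: writing $\mathbb{E} Y^{2}=\sum_{u,v}\mathbb{P}(|C(u)|\ge\beta_n,\ |C(v)|\ge\beta_n)$, the ``distinct component'' part factorizes as $(1+o(1))(\mathbb{E} Y)^{2}$ because the two truncated explorations --- of combined size $o(\sqrt{n!})$ --- decouple exactly as in Step 1, and the ``same component'' part is $O(\beta_n\,\mathbb{E} Y)=o((\mathbb{E} Y)^{2})$; Chebyshev completes the step.

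\emph{Step 3 (no medium components; uniqueness and size).}
It remains to show that a.s.\ no component of $\Gamma_n$ has size in the window $\bigl(\beta_n,\,(1-\eta)\,\wp(\epsilon_n)\lambda_n n!\bigr]$ for each fixed $\eta>0$. For sizes $s$ up to $\sqrt{(n-1)!}$ this is a union bound over the $n!$ vertices using Step 1 and the subcritical-type decay of the total-progeny distribution. For larger $s$ one uses the vertex-isoperimetric inequality: such a component has vertex boundary of size at least $h(s)$, every vertex of which must be unselected, and summing $(1-\lambda_n)^{h(s)}$ over connected vertex sets --- whose number is controlled by $(n-1)$-regularity --- is $o(1)$ as soon as the isoperimetric profile $h$ established above is sufficiently large; equivalently this may be packaged as a two-round sprinkling argument in which the boundary of any would-be medium component is a.s.\ joined to it by the second round of randomness. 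With the window empty, every component of size $\ge\beta_n$ actually has size $>(1-\eta)\wp(\epsilon_n)\lambda_n n!$, and because these sizes sum to $Y\le(1+\eta)\wp(\epsilon_n)\lambda_n n!$ there are at most $\tfrac{1+\eta}{1-\eta}<2$ of them --- hence exactly one, $C_n^{(1)}$, for which
\[
|C_n^{(1)}|=Y=(1+o(1))\,\wp(\epsilon_n)\,\lambda_n\,n!=(1+o(1))\,\wp(\epsilon_n)\,\frac{1+\epsilon_n}{n-1}\,n!.
\]

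\emph{Main obstacle.}
The hard part is the exclusion of medium components in the non-local range $\sqrt{(n-1)!}\le s\le\Theta\bigl(\epsilon_n(n-1)!\bigr)$, where the branching-process picture is no longer available and one must exploit the vertex-isoperimetric properties of $\Gamma(S_n,T_n)$ quantitatively --- uniformly over all minimal transposition generating sets (the path, i.e.\ the bubble-sort tree, being the least expansive instance) and down to $\epsilon_n$ as small as $n^{-1/3+\delta}$. Balancing the lower bound on $h(s)$ against the combinatorial count of connected vertex sets is precisely where the hypothesis $\epsilon_n\ge n^{-1/3+\delta}$ is spent; a secondary technical point is making the branching-process comparison in Step 1 sharp enough that its $o(1)$ error is genuinely $o(\wp(\epsilon_n))=o(\epsilon_n)$ as $\epsilon_n\to0$.
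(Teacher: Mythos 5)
Your three-step outline (local branching-process comparison, second-moment concentration, exclusion of medium components by a global argument) is the same broad programme the paper follows, but the two technical pillars you lean on are either unjustified or explicitly not what the paper needs, and the enabling structural lemmas are missing.

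\textbf{Step 1 gap.} You treat the exploration as a BFS in which each processed vertex contributes $\mathrm{Bin}(n-2,\lambda_n)$ \emph{new} vertices up to errors of order $O(\beta_n^2 n/n!)$ from revisits. That estimate is the one appropriate for an Erd\H{o}s--R\'enyi graph on $n!$ vertices; it is not justified for $\Gamma(S_n,T_n)$, which is a highly structured $(n-1)$-regular graph with many short cycles (any two disjoint transpositions in $T_n$ give a $4$-cycle through every vertex; for the bubble-sort tree there are $\Theta(n^2)$ such pairs). When $\epsilon_n$ is as small as $n^{-1/3+\delta}$ the exploration tree reaches depth of order $\epsilon_n^{-1}\log(\beta_n\epsilon_n)$, which is polynomial in $n$, so cycle effects are not a priori negligible, and nothing in your sketch bounds them. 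The paper sidesteps this entirely: Lemma~\ref{L:minigene} restricts the moves to a designated subset $N\subset T_n$ and \emph{never reuses a generator}, so the object constructed is a literal subtree of $\Gamma(S_n,T_n)$ and no revisit accounting is needed; the matching upper bound comes separately from the Bollob\'as--Kohayakawa--\L uczak stochastic domination (eq.~\eqref{E:retrees}), not from a two-sided BFS comparison. You would need to supply an honest cycle count or reproduce a tree construction of this kind.

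\textbf{Step 3 misconception and missing ingredients.} You frame the elimination of medium components as a balance between a (yet-to-be-proved) vertex-isoperimetric profile $h(s)$ and the count of connected vertex sets, and flag uniformity over the choice of $T_n$ (``the bubble-sort tree being the least expansive'') as the main obstacle. The paper explicitly does \emph{not} use a Harper-type isoperimetric inequality and makes a point of this: Lemma~\ref{L:split1} only invokes the generic Aldous/Babai estimate $\vert{\sf d}(S)\vert\ge\vert S\vert(1-\vert S\vert/n!)/\mathrm{diam}$, valid for every Cayley graph, which here gives only $\vert{\sf d}(S)\vert\ge c\vert S\vert/n^2$ by Lemma~\ref{L:gut}. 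That weak, uniform bound suffices because of two facts your sketch never establishes: (i) by Lemma~\ref{L:expand2} every $\Gamma_{n,k}$-vertex already sits in a subcomponent of size at least $c_k n^{k\delta+2/3}$, so the number of possible splits $(S,T)$ of $\Gamma_{n,k}$ is at most $2^{n!/M_k}$ with $M_k$ polynomially large --- this is what makes the union bound in the sprinkling step converge; and (ii) by Lemma~\ref{L:dense-01}, $\Gamma_{n,k}$ is a.s.\ $2$-dense in $\Gamma(S_n,T_n)$ up to an exponentially small exceptional set, which is what converts the generic boundary estimate into $c(n-5)!/(n-1)^7$ vertex-disjoint length-$\le3$ paths between $d(S)$ and $d(T)$. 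Your ``union bound for $\beta_n\le s\le\sqrt{(n-1)!}$ using the subcritical total-progeny tail'' also doesn't work as written: the stochastic domination only gives $\mathbb{P}(\vert C_r\vert\ge s)\le\mathbb{P}(\vert C_{r^*}\vert\ge s)$, and the right-hand side includes the survival event, so it is $\approx\wp(\epsilon_n)$ rather than exponentially small; the tail over \emph{finite} branching-process sizes does not transfer to the actual component. The sprinkling argument you mention parenthetically is not an alternative packaging --- it is the argument, and it requires (i) and (ii) to close.

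In short, the high-level route is the paper's, but Step~1 as stated has an unjustified error term that the paper avoids by constructing subtrees explicitly, and Step~3 relies on an isoperimetric estimate the paper deliberately avoids while omitting the $2$-density and polynomial-subcomponent lemmas that actually drive the sprinkling union bound.
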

%%%
%%%%%%%%%%%%%%%%%%%%%%%%%%%%%%%%%%%%%%%%%%%%%%%%%%%%%%%%%
%%%

\begin{figure*}
\includegraphics[width=0.5\textwidth]{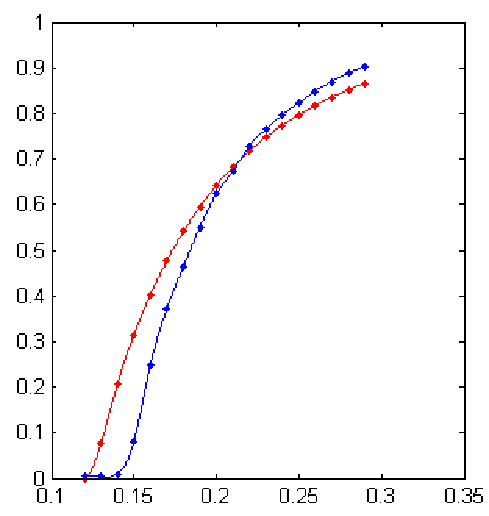}
\caption{The evolution of the giant component in random induced
subgraphs of $\Gamma(S_9,P_9)$. We display the relative size of the
giant component $\frac{\vert C_9^{(1)}\vert}{\vert\Gamma_9\vert}$ as
a function of $\lambda_9=(1+\epsilon)/8$ as data-curve (blue) versus
the growth predicted by Theorem~\ref{T:main} (red).} \label{F:ex}
\end{figure*}

In contrast to vertex-induced random graphs, edge-induced random graphs
have been studied quite extensively.
Random induced subgraphs of $n$-cubes \cite{Bollobas:91,Reidys:08rand}.
as well as $G(n,p_n)$ and random induced subgraphs of $\Gamma(S_n,T_n)$
exhibit a giant component for very small vertex selection probabilities.
One might speculate that the critical probability
$p_n=\frac{1+\theta\cdot n^{-\frac{1}{3}}}{n}$ is determined by the size
of the generator set. Note that $\vert T_n\vert=n-1$ holds for any
minimal generating set of transpositions and the size of the generator 
set for $n$-cube is $n$. Specific properties of $n$-cubes,
like for instance, the isoperimetric inequality \cite{Harper:66b},
do not play a key role for establishing the existence of the giant
component. The isoperimetric inequality depends on an inductive argument
using particular properties of a linear ordering of the vertices
of an $n$-cube. This induction cannot be carried out for Cayley graphs over
canonical transpositions.
In this paper any argument involving (vertex) boundaries follows from a
generic estimate of the vertex boundary in Cayley graphs due to Aldous
\cite{Aldous:87,Babai:91b}.

The paper is organized as follows: after introducing in
Section~\ref{S:2} our notation and some basic facts about branching
processes, we analyze in Section~\ref{S:3} vertices contained in
polynomial size subcomponents. The strategy is similar to that in
\cite{Reidys:08rand}, where first a specific branching process is
embedded (for its first $\lfloor \frac{1}{4}n^{\frac{2}{3}}\rfloor$
steps) into $\Gamma(S_n,T_n)$. It is its survival probability that
provides a lower bound on the probability that a given vertex is
contained in a subcomponent of arbitrary, polynomial size. In
Section~\ref{S:4} we ``sandwich'' this bound by showing that there
are many vertices in ``small'' components. Only here we use
$\epsilon<1$. In Section~\ref{S:5} we show that there are many
vertex disjoint paths between certain splits of permutations. The
a.s.~existence of the giant component follows using the ideas of
Ajtai {\it et al.} \cite{Ajtai:82}.

%%%
%%%%%%%%%%%%%%%%%%%%%%%%%%%%%%%%%%%%%%%%%%%%%%%%%%%%%%%%%%%%%%%%%%%%
%%%
\section{Background and notation}\label{S:2}
%%%
%%%%%%%%%%%%%%%%%%%%%%%%%%%%%%%%%%%%%%%%%%%%%%%%%%%%%%%%%%%%%%%%%%%%%
%%%

Let $S_n$ denote the symmetric group over $[n]$. We write a permutation
$\pi\in S_n$ as an $n$-tuple $(x_1,x_2,\cdots,x_n)$, i.e.,
$$
\left(\begin{array}{cccc}
  1 & 2 & \cdots & n\\
  x_1 & x_2 & \cdots & x_n
\end{array}\right)=(x_1,x_2,\cdots,x_n).
$$
Particularly we use $(i\,j)$ to briefly denote the transpositions that
merely interchange the elements at positions $i$ and $j$ of the identity
permutation. Plainly, we have
\begin{equation}
(x_1,\cdots,x_i,x_{i+1},\cdots,x_{j-1},x_j,\cdots,x_n)\cdot(i\,j)=
(x_1,\cdots,x_j,x_{i+1},\cdots,x_{j-1},x_i,\cdots, x_n).
\end{equation}
Furthermore, we set $((x_1,\cdots,x_n))_m=x_m$ i.e.~extracting the $m$-th
coordinate.
Let $T_n\subset S_n$ be a minimal generating set of transpositions.
We consider the Cayley graph $\Gamma(S_n,T_n)$, having vertex set
$S_n$ and edges $\{v,v'\}$ where $v^{-1}\cdot v'\in T_n$.
For $v,v'\in S_n$, let $d(v,v')$ be the minimal
number of $T_n$-transpositions by which $v$ and $v'$ differ.
For $A\subset S_n$ we set $\text{\sf B}(A,j)=\{v\in S_n\mid \exists \,
\alpha\in A;\,d(v,\alpha)\le j\}$ and
$\text{\sf d}(A,i) = \{v\in S_n\setminus A\mid \exists \,
\alpha\in A;\, d(v,\alpha)=i\}$
and call $\text{\sf B}(A,j)$ and $\text{\sf d}(A)=\text{\sf d}(A,1)$
the ball of radius $j$ around $A$ and the vertex boundary of $A$ in
$\Gamma(S_n,T_n)$. If $A=\{\alpha\}$ we simply write
$\text{\sf B}(\alpha,j)$. Let $D,E\subset S_n$, we call $D$
$\ell$-dense in $E$ if $\text{\sf B}(\sigma,\ell)\cap
D\neq\varnothing$ for any $\sigma\in E$.
Let ``$\le$'' be the following linear order over $\Gamma(S_n,T_n)$
\begin{equation}\label{E:order}
\sigma \le \tau \quad \Longleftrightarrow \quad \sigma=\tau \
{\mbox {or}} \
\sigma<_{\text{\rm lex}} \tau,
\end{equation}
where $<_{\text{\rm lex}}$ denotes the lexicographical order. Any
notion of minimal or smallest element in a subset $A\in S_n$
refers to eq.~(\ref{E:order}).

Let $\Gamma_{\lambda_n}(S_n,T_n)$ be the probability space (random graph)
consisting of $\Gamma(S_n,T_n)$-subgraphs, $\Gamma_n$, induced by selecting
each $\Gamma(S_n,T_n)$-vertex with independent probability
$\lambda_n$.
A property $\text{\sf M}$ is a subset of induced subgraphs of
$\Gamma(S_n,T_n)$ closed under graph isomorphisms. The terminology
``$\text{\sf M}$ holds a.s.'' is equivalent to
$\lim_{n\to\infty}{\mathbb{P}}(\text{\sf M})=1$. A component of
$\Gamma_n$ is a maximal, connected, induced
$\Gamma_n$-subgraph, $C_n$. The largest
$\Gamma_n$-component is denoted by $C_n^{(1)}$. We write
$x_n\sim y_n$ if and only if (a) $\lim_{n\to\infty}x_n/y_n$ exists
and (b) $\lim_{n\to\infty}x_n/y_n=1$. We set $g(n)=o(f(n))$ if and only
if $g(n)/f(n)\rightarrow 0$.
A largest $\Gamma_n$-component $C_n^{(1)}$ is called giant if it is
unique, i.e.~any other component, $C_n$, satisfies $\vert C_n
\vert=o(\vert C_n^{(1)} \vert)$.

We furthermore write $g(n)=O(f(n))$ as $n\rightarrow \infty$ if and only if
$\frac{g(n)}{f(n)}$ is bounded as $n\rightarrow\infty$, i.e., for
arbitrary $M>0$, there exists a constant $C$ (independent of $M$) such that
for all $n>M$, $\left|\frac{g(n)}{f(n)}\right|\le C$.
%%%%%%%%%
%%%%%%%%%%%%%%%%%%%%%%%%%%%%%%%%%%%%%%%%%%%%%%%%%%%%%%%%%%%%%%%%%%%%%%%%%%
%%%%%%%%%

Let $Z_n=\sum_{i=1}^n \xi_i$ be a sum of mutually independent indicator random
variables (r.v.), $\xi_i$ having values in $\{0,1\}$. Then we have,
\cite{Chernoff:52}, for $\eta>0$ and
$c_\eta=\min\{-\ln(e^{\eta}[1+\eta]^{-[1+\eta]}),
\frac{\eta^2}{2}\}$
\begin{equation}\label{E:cher}
\mathbb{P}(\,\vert\,Z_n-\mathbb{E}[Z_n]\,\vert\,>
\eta\,\mathbb{E}[Z_n]\,) \le
        2 e^{-c_\eta \mathbb{E}[Z_n]}\, .
\end{equation}
In Lemma 3 we shall use
\begin{equation}\label{E:lde}
\mathbb{P}(\,Z_n\,<
(1-\eta)\,\mathbb{E}[Z_n]\,) \le
       e^{-\frac{\eta^2}{2}\cdot\mathbb{E}[Z_n]}\, .
\end{equation}
In the following we shall assume that $n$ is always sufficiently large.
Let us next recall Chebyshev's inequality \cite{Ross}: suppose $\xi$ is a
r.v.~having finite variance, $\mathbb{V}(\xi)$, and $m>0$. Then
\begin{equation}\label{E:cheby}
\mathbb{P}(|\xi-\mathbb{E}(\xi)|\ge m)\le
\frac{\mathbb{V}(\xi)}{m^2}.
\end{equation}
Furthermore, the r.v.~$X$ is $\mbox{Bi}(n,\lambda_n)$-distributed if
$$
\mathbb{P}(X=\ell)=\binom{n}{\ell}\lambda_n^\ell\,(1-\lambda_n)^{n-\ell}
$$
and we call $X$ binomially distributed (with parameters $n,\lambda_n$).

We next come to some basic facts about binomial branching processes,
$\mathcal{P}_n=\mathcal{P}_n(p)$ \cite{Harris:63,Kolchin:86}.
Suppose the process $\mathcal{P}_n$ is initialized at $\xi$.
Let $(\xi_i^{(t)})$, $i,t\in\mathbb{N}$ count the number of ``offspring''
of the $i$th-individual of generation $(t-1)$ and in particular $\xi_1^{(1)}$
counts the number of offspring generated by $\xi$, in which all the
r.v.s~$\xi_i^{(t)}$ are Bi$(n,p)$-distributed.
Let $\mathcal{P}_0=\mathcal{P}_0(p)$ denote the branching process for
which $\xi_1^{(1)}$ is Bi$(n,p)$- and all $\xi_i^{(t)}\ne\xi_1^{(1)}$ are
Bi$(n-1,p)$-distributed. Furthermore, let $\mathcal{P}_P(\lambda)$,
$(\lambda>0)$ denote the Poisson branching process in which all
individuals $\xi_i^{(t)}$ generate offspring according to the Poisson
distribution, i.e., $\mathbb{P}(\xi_{i}^{(t)}=j)=
\frac{\lambda^j}{j!}e^{-\lambda}$.
We accordingly consider the family of r.v.~$(Z_i^x)_{i\in \mathbb{N}_0}$:
$Z_0^x=1$ and $Z^x_{t} =
\sum_{i=1}^{Z^x_{t-1}}\xi_i^{(t)}$ for $t\ge 1$ and interpret $Z^x_t$ as
the number of individuals ``alive'' in generation $t$, where
$x\in\{n,0,P\}$.
Of particular interest for us will be the limit
$\lim_{t\to\infty}\mathbb{P}(Z_t^x>0)$, i.e.~the probability of
infinite survival. We write
$$
\pi_0(p)=\lim_{t\to\infty}\mathbb{P}(Z_t^0>0), \
\pi_n(p)=\lim_{t\to\infty}\mathbb{P}(Z_t^n>0)
\ \text{\rm and} \
\pi_P(\lambda)=\lim_{t\to\infty}\mathbb{P}(Z_t^P>0)
$$
for the survival probability of $\mathcal{P}_0$, $\mathcal{P}_n$ and
$\mathcal{P}_P(\lambda)$, respectively.
%%%
%%%%%%%%%%%%%%%%%%%%%%%%%%%%%%%%%%%%%%%%%%%%%%%%%%%%%%%%%%%%%%%%%%%%%%%%%
%%%

%%%
%%%%%%%%%%%%%%%%%%%%%%%%%%%%%%%%%%%%%%%%%%%%%%%%%%%%%%%%%%%%%%%%%%%%%%%%%
%%%
\begin{lemma}\label{C:1}\cite{Bollobas:92}
Let $p=\chi_n/n$ where $\chi_n > 1$, then $\pi_0(p) =
(1 + o(1))\pi_P(\chi_n)$, where $\pi_P(\chi_n)>0$ is the unique
positive root of the equation $e^{-\chi_n y}=1-y$. Particularly, if
$\chi_n=1+\epsilon_n$ where $0<\epsilon_n=o(1)$ and $s=o(n\epsilon_n)$,
\begin{equation*}
\pi_0(p)=(1+ o(1))\pi_{n-s}(p)=(2+o(1))\epsilon_n.
\end{equation*}
\end{lemma}

%%%
%%%%%%%%%%%%%%%%%%%%%%%%%%%%%%%%%%%%%%%%%%%%%%%%%%%%%%%%%%%%%%%%%%%%%%%%%
%%%
\begin{proof}
Let $f_{m}(s)$ be the probability generating function for the binomial
distribution ${\rm Bi}(m,\frac{\chi_n}{n})$ and $g_{\chi_n}(s)$
be the probability generating function for Poisson distribution with parameter
$\lambda=\chi_n$, i.e.,
\begin{eqnarray*}
f_m(s)&=&\sum_{j=1}^m P(\xi_i^{(t)}=j)\cdot s^j\\
&=&\sum_{j=1}^m\binom{m}{j}(\frac{\chi_ns}{n})^j(1-\frac{\chi_n}{n})^{m-j}\\
&=&\left[1-(1-s)\frac{\chi_n}{n}\right]^m\\
g_{\chi_n}(s)&=&\sum_{i=0}^{\infty}e^{-\chi_n}\cdot\frac{(\chi_n)^i}{i!}
\cdot s^i=e^{(s-1)\chi_n}.\\
\end{eqnarray*}
Then $\pi_n$ and $\pi_{\chi_n}$, the survival probabilities for the binomial
distribution and Poisson distribution, are the roots of $f_n(1-s)=1-s$ and
$g_{\chi_n}(1-s)=1-s$, respectively. Clearly, $f_n(1-s)=g_{\chi_n}(1-s)
e^{O(\frac{1}{n})}$, whence
\begin{eqnarray}
f_n(1-\pi_{\chi_n}+o(1))&=& g_{\chi_n}(1-\pi_{\chi_n}+o(1))\cdot
e^{O(\frac{1}{n})}\nonumber\\
&=&e^{-\pi_{\chi_n}\chi_n}e^{o(1)\chi_n
+O(\frac{1}{n})}\nonumber \\
&=&e^{-\pi_{\chi_n}\chi_n}(1+o(1))=1-\pi_{\chi_n}+o(1)\label{E:jjj}.
\end{eqnarray}
Since $E(\xi_i^{(t)})=f_n'(1)=\frac{\chi_n}{n}n=\chi_n>1$, where
$\xi_i^{(t)}$ counts the number of ``offspring'' of the $i$th-individual of
generation $(t-1)$, we can conclude that $\pi_n$ is the unique positive
root of $f_n(1-s)=1-s$.
In view of eq.~(\ref{E:jjj}) we have $\pi_n=\pi_{\chi_n}+o(1)
=\pi_{\chi_n}(1+o(1))$. This implies
$$
\pi_0(\frac{\chi_n}{n})=(1+o(1))\pi_n=\pi_{\chi_n}(1+o(1)),
$$
where $x=\pi_{\chi_n}$ is the unique positive root of $e^{-\chi_n\cdot x}=1-x$.
In case of $0<\epsilon_n=o(1)$, we can compute $\pi_n$ explicitly via the
binomial branching process $\mathcal{P}_m(\frac{\chi_n}{n})$.
To this end we consider the root of $f_{n-k}(1-s)=1-s$ where
$k=o(n\epsilon_n)$ and observe
\begin{eqnarray*}
\pi_n(\frac{1+\epsilon_n}{n})&=&\frac{2n\epsilon_n}{n-1}+O(\epsilon_n^2)
=2\epsilon_n+O(\frac{\epsilon_n}{n})+O(\epsilon_n^2)=(2+o(1))\epsilon_n\\
\pi_{n-k}(\frac{1+\epsilon_n}{n})&=&2\epsilon_n+O(\frac{\epsilon_n}{n})+
O(\frac{k}{n})+O(\epsilon_n^2)=(2+o(1))\epsilon_n.
\end{eqnarray*}
Using $\pi_{n-k}(\frac{1+\epsilon_n}{n})\le \pi_0(\frac{1+\epsilon_n}{n})
\le \pi_n(\frac{1+\epsilon_n}{n})$, we arrive at
$$
\pi_0(\frac{1+\epsilon_n}{n})=(1+o(1))\pi_n(\frac{1+\epsilon_n}{n})
=(1+o(1))(2+o(1))\epsilon_n=(2+o(1))\epsilon_n
$$
and the lemma follows.
\end{proof}

%%%
%%%%%%%%%%%%%%%%%%%%%%%%%%%%%%%%%%%%%%%%%%%%%%%%%%%%%%%%%%%%%%%%%%%%%%%%%%%%%%
%%%
\section{Components of polynomial size}
\label{S:3}
%%%
%%%%%%%%%%%%%%%%%%%%%%%%%%%%%%%%%%%%%%%%%%%%%%%%%%%%%%%%%%%%%%%%%%%%%%%%%%%%%%
%%%
Let $\epsilon$ be a positive constant satisfying $0<\epsilon < 1$.
Suppose $y=x>0$ is the unique positive root of $\exp(-(1+\epsilon)y)=1-y$ and
\begin{equation}\label{E:it}
\wp(\epsilon_n)=
\begin{cases}(1+o(1)) x & \text{\rm for
$\epsilon_n=\epsilon>0$} \\
(2+o(1))\epsilon_n & \text{\rm for $0<\epsilon_n=o(1)$}.
\end{cases}
\end{equation}
According to Lemma~\ref{C:1},
$\wp(\epsilon_n)=\pi_0(\frac{1+\epsilon_n}{n-1})$ is the survival
probability of branching process $\mathcal{P}_0(\frac{1+\epsilon_n}{n-1})$.
For $k\in\mathbb{N}$ we set
\begin{equation}\label{E:part}
\mu_n  =  \lfloor \frac{1}{2k(k+1)}n^{\frac{2}{3}}\rfloor,\quad
\ell_n = \lfloor\frac{k}{2(k+1)}
n^{\frac{2}{3}}\rfloor,\quad\text{\rm and}\quad
r_n=n-k\mu_n-\ell_n .
\end{equation}
Without loss of generality we can assume $\mu_n,\ell_n,r_n\in\mathbb{N}$
and establish some basic properties of the Cayley graph $\Gamma(S_n,T_n)$:
%%%
%%%%%%%%%%%%%%%%%%%%%%%%%%%%%%%%%%%%%%%%%%%%%%%%%%%%%%%%%%%%%%%%%%%%%%%%%%%%%
%%%
\begin{lemma}\label{L:gut}
Let $T_n$ be a minimal generating set of $S_n$ consisting of transpositions,
then we have \\
{\rm (1)} $T_n$ has cardinality $n-1$ and corresponds uniquely to a labeled
tree over $[n]$, denoted by ${\mathcal T}_n$. \\
{\rm (2)} there exists a sequence $(v_i)_{2\le i}$ such that $T_n =
\{(v_i\,s_i)\mid 2\le i\le n\}$ and
\begin{eqnarray}
\label{E:wichtig} \forall \;j<i;\quad
x_{v_i}=((x_1,\dots,x_n)\cdot(v_j\,s_j))_{v_i} &\neq &
((x_1,\dots,x_n)\cdot(v_i\,s_i))_{v_i}.
\end{eqnarray}
{\rm (3)} the diameter of $\Gamma(S_n,T_n)$ is given by
\begin{equation}\label{E:diam}
{\rm diam}(\Gamma(S_n,T_n))\le \binom{n}{2}.
\end{equation}
\end{lemma}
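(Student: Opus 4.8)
The plan is to exploit the standard correspondence between generating sets of transpositions and graphs on the vertex set $[n]$. For part (1), I would first recall that a set of transpositions $T_n$ generates $S_n$ if and only if the graph $G_{T_n}$ on vertex set $[n]$, with an edge $\{i,j\}$ for each transposition $(i\,j)\in T_n$, is connected; this is a classical fact (the orbit of $i$ under $\langle T_n\rangle$ is exactly the connected component of $i$). Minimality of $T_n$ then forces $G_{T_n}$ to be edge-minimal among connected spanning subgraphs, i.e.\ a spanning tree. Since a tree on $[n]$ has exactly $n-1$ edges, $|T_n|=n-1$, and the correspondence $T_n\leftrightarrow \mathcal T_n$ is a bijection. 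This gives (1).

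For part (2), I would root the tree $\mathcal T_n$ at the vertex $1$ and order the remaining vertices $v_2,\dots,v_n$ so that each $v_i$ appears after its parent in $\mathcal T_n$ (a parent-before-child / BFS-or-DFS ordering). Write $s_i$ for the parent of $v_i$, so that $T_n=\{(v_i\,s_i)\mid 2\le i\le n\}$. The key point in \eqref{E:wichtig} is that applying the transposition $(v_j\,s_j)$ for $j<i$ cannot move the entry in position $v_i$ in the way that applying $(v_i\,s_i)$ does: since the vertices are ordered parent-first, $v_i\notin\{v_j,s_j\}$ for every $j<i$ (here one must use that $s_j$, being the parent of $v_j$, precedes $v_j$ and hence also comes before $v_i$ in the ordering — this is exactly where the parent-before-child property is needed), so the $v_i$-th coordinate is fixed by $(v_j\,s_j)$, whereas $(v_i\,s_i)$ swaps coordinates $v_i$ and $s_i$ and hence changes the $v_i$-th coordinate whenever $x_{v_i}\ne x_{s_i}$. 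I would phrase \eqref{E:wichtig} precisely so that this last caveat is accounted for; the inequality asserts that the coordinate in slot $v_i$ after applying the "new" generator differs from what any earlier generator would have put there, which is the statement that lets one build up a permutation coordinate-by-coordinate without interference.

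For part (3), I would argue by induction on $n$ using the ordering from (2). Given $\pi=(x_1,\dots,x_n)$, use the generator $(v_n\,s_n)$ at most $\mathrm{diam}(\mathcal T_n)$ times — in fact, move the correct entry into position $v_n$ along the tree path — to reduce to fixing the first $n-1$ coordinates, which is handled by the subtree/subgraph $\mathcal T_{n-1}$ on $[n]\setminus\{v_n\}$ (a leaf-removal argument: $v_n$ can be taken to be a leaf of $\mathcal T_n$, so $\mathcal T_{n-1}$ is again a tree and its generating set is $T_n$ minus one transposition). This yields $\mathrm{diam}(\Gamma(S_n,T_n))\le \sum_{i=2}^n \mathrm{diam}(\mathcal T_i)$, and since $\mathrm{diam}(\mathcal T_i)\le i-1$ the sum is at most $\sum_{i=2}^n(i-1)=\binom n2$.

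The main obstacle is making part (2) both correct and genuinely useful: the naive reading of \eqref{E:wichtig} is false without the hypothesis $x_{v_i}\ne x_{s_i}$, so the real work is to choose the ordering $(v_i)$ — a parent-before-child traversal of the rooted tree $\mathcal T_n$ — so that the support $\{v_j,s_j\}$ of every earlier generator avoids the "pivot" coordinate $v_i$, and then to state the non-interference property in the exact form in which it will be invoked later (presumably in Section~\ref{S:3}, to embed the branching process and to sort permutations coordinate-by-coordinate). Parts (1) and (3) are then essentially bookkeeping on trees, with the only subtlety in (3) being the choice of a leaf to peel off so that the induction hypothesis applies to a genuinely smaller Cayley graph of the same type.
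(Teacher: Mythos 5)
Your approach matches the paper's essentially step for step: part (1) via the transposition-set $\leftrightarrow$ spanning-tree correspondence, part (2) via a BFS (parent-before-child) ordering rooted at $1$, and part (3) by peeling off the last-added leaf $v_n$ and recursing. The only thing to tidy is your concern that eq.~(\ref{E:wichtig}) requires the extra hypothesis $x_{v_i}\ne x_{s_i}$: this holds automatically because $(x_1,\dots,x_n)$ is a permutation, hence has distinct coordinates, so the inequality is unconditional once the ordering is parent-first.
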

%%%
%%%%%%%%%%%%%%%%%%%%%%%%%%%%%%%%%%%%%%%%%%%%%%%%%%%%%%%%%%%%%%%%%%%%%%%%%%%%%%%
%%%
\begin{proof}
It is straightforward to prove by induction that $\vert
T_n\vert=n-1$. We next consider the graph $\mathcal{T}_n$ over
$[n]$, having edge-set $T_n$. Since $\langle T_n\rangle=S_n$,
$\mathcal{T}_n$ is connected and since $T_n$ is independent,
$\mathcal{T}_n$ is a tree. This establishes the mapping
\begin{equation*}
\psi\colon \{T_n\mid \text{\rm $T_n$ is a maximal independent
transposition set}\}\longrightarrow \{\mathcal{T}_n\mid \text{\rm
$\mathcal{T}_n$ is a tree over $[n]$}\}.
\end{equation*}
Furthermore, $\psi$ has an inverse; as the edges of a tree over
$[n]$ give rise to a maximal independent set of transpositions that
generate $S_n$, whence assertion (1).
Note that the critical probability $\lambda_n=\frac{1+\epsilon_n}{n-1}$
of Theorem~\ref{T:main} is determined by the cardinality of the
generator set $T_n$, i.e., $\vert T_n\vert=n-1$.\\
In order to prove (2), we generate the tree $\mathcal{T}_n$
inductively as follows: we start with vertex $1$ by setting
$\mathcal{T}_1=\varnothing$ and $v_1=1$. Given $\mathcal{T}_i$, we
consider the transposition $(v_{i+1}\,s_{i+1})$, where $v_{i+1}$ is
the unique minimal element contained in $\mathcal{T}_n\setminus
\mathcal{T}_i$, having minimal distance to $1$, and $s_{i+1}$ is its
unique $\mathcal{T}_i$-neighbor. We then set
$\mathcal{T}_{i+1}=\mathcal{T}_i\cup \{(v_{i+1}\,s_{i+1})\}$. This
process gives rise to the sequence of trees
$\mathcal{T}_2\subset\mathcal{T}_3\subset \dots\subset
\mathcal{T}_n$ and denoting the vertex sets of $\mathcal{T}_i$ by
$V_i$, we have $V_1=\{1\} \subset V_2\subset V_3\subset \dots
V_{n-1}\subset V_{n}=[n]$ where $\{v_i\}=V_i\setminus V_{i-1}$. By
construction
\begin{eqnarray*}
\forall \, j<i; \quad x_{v_i}=((x_1,\dots,x_n)\cdot(v_j\,s_j))_{v_i} & \neq &
((x_1,\dots,x_n)\cdot(v_i\,s_i))_{v_i},
\end{eqnarray*}
where $(x_1,\dots,x_n)\cdot(v_j\,s_j)$ is the product of permutations and
$((\tilde{x}_1,\dots,\tilde{x}_n))_{v_i}=\tilde{x}_{v_i}$. In other words,
we order the $T_n$-transpositions via the sequence of trees
$\{\mathcal{T}_i\}$, such that the transpositions added before
$(v_{i}\,s_{i})$ will not transpose the element $x_{v_{i}}$.
To prove (3) we can, without loss of generality, restrict ourselves to
the case where we have an arbitrary permutation $(x_1,\dots,x_n)$ and
$(y_1,\dots,y_n)$, the unique permutation satisfying $y_{v_i}=i$.
We proceed by constructing a $\Gamma(S_n,T_n)$-path between these two
permutations. Obviously, there exists a unique $v_j$ such that
$n=x_{v_j}$ and in the tree $\mathcal{T}_n$ there exists a unique path
of length
at most ${\rm diam}(\mathcal{T}_n)\le n-1$ connecting $v_j$ and $v_n$.
Accordingly, there is a $\Gamma(S_n,T_n)$-path of length at most
${\rm diam}(\mathcal{T}_n)$ between $(x_i)$ and a permutation $(z_i)$
such that $z_{v_n}=n$. Our construction in (2) implies
\begin{equation*}
\forall \, i<n;\quad ((z_1,\dots,z_n)\cdot(v_i\,s_i))_{v_n}=n,
\end{equation*}
whence we can proceed inductively, moving $(n-1)$ to the $v_{n-1}$th
position using the subtree $\mathcal{T}_{n-1}$. We consequently arrive at
\begin{equation*}
{\rm diam}(\Gamma(S_n,T_n))\le  \sum_{i=2}^{n}{\rm diam}(\mathcal{T}_i)\le
\binom{n}{2}
\end{equation*}
and the proof of the lemma is complete.
\end{proof}

In case of star-transpositions, i.e.~$T_n=P_n=\{(1\,j)\mid 2\le j\le
n\}$, we have the following situation:
\begin{equation}
\{1\}\subset \{(1\,2)\}\subset \{(1\,2),(1\,3)\}\subset \dots\subset
\{(1\,j)\mid 2\le j\le n\},
\end{equation}
$(v_i\,s_i)=(i\,1)$ i.e.~$s_i=1$ and ${\rm diam}(\Gamma(S_n,P_n))=
\lfloor\frac{3(n-1)}{2}\rfloor$, which can be derived from a theorem
of Pak \cite{Pak:99}, being strictly less than $\binom{n}{2}$.

{\bf Example $1$.}
Consider the Cayley graph $\Gamma(S_5,P_5)$ and generate the trees
$\{\mathcal{T}_i\}_{i=1}^5$ inductively. Setting $\mathcal{T}_1=\varnothing$
and $v_1=1$ we select the minimal element in distance $1$ to $v_1$ and
set $v_2=2$, $\mathcal{T}_2=\{(1\,2)\}$. We proceed by selecting
the minimal element in distance $1$ to the vertex set $\{1,2\}$ and
set $v_3=3$, $\mathcal{T}_3=\{(1\,2),(1\,3)\}$.
Finally, we select the minimal element in distance $1$ to the vertex set
$\{1,2,3\}$ and set $v_4=4$, $\mathcal{T}_4=\{(1\,2),(1\,3),(1\,4)\}$.
The only remaining vertex $v_5=5$ is the minimal element in distance $1$ to
the vertex set $\{1,2,3,4\}$ and $\mathcal{T}_5=\{(1\,2),(1\,3),(1\,4),
(1\,5)\}$.
%%%%%%
%%%%%%%%%%%%%%%%%%%%%%%%%%%%%%%%%%%%%%%%%%%%%%
%%%%%%
\begin{center}
\setlength{\unitlength}{4pt}
\begin{picture}(100,10)(-5,0)
\put(0,10){\line(-1,-1){5}}
\put(0,10){\circle*{0.5}}\put(1,11){\small{$1$}}
\put(-5,5){\circle*{0.5}}\put(-6,2){\small{$2$}}
\put(0,10){\line(-1,-2){2.5}}
\put(-2.5,5){\circle*{0.5}}\put(-3,2){\small{$3$}}
\put(0,10){\line(1,-2){2.5}}
\put(2.5,5){\circle*{0.5}}\put(2,2){\small{$4$}}
\put(0,10){\line(1,-1){5}}\put(5,5){\circle*{0.5}}
\put(5,2){\small{$5$}}\put(6,7.5){is generated via}
%%%%%%%%%
\put(30,10){\line(-1,-1){5}}
\put(30,10){\circle*{0.5}}\put(31,11){\small{$1$}}
\put(25,5){\circle*{0.5}}\put(25,5){\circle{2}}
\put(24,2){\small{$2$}}
\put(33,7.5){\vector(1,0){5}}
\put(24,-1){\small{$v_2=2$}}
%%%%%%%%%
\put(45,10){\line(-1,-1){5}}
\put(45,10){\circle*{0.5}}\put(46,11){\small{$1$}}
\put(40,5){\circle*{0.5}}\put(39,2){\small{$2$}}
\put(45,10){\line(-1,-2){2.5}}
\put(42.5,5){\circle*{0.5}}\put(42,2){\small{$3$}}
\put(42.5,5){\circle{2}}
\put(48,7.5){\vector(1,0){5}}
\put(39,-1){\small{$v_3=3$}}
%%%%%%%%%%%%%%%%%%
\put(60,10){\line(-1,-1){5}}
\put(60,10){\circle*{0.5}}\put(61,11){\small{$1$}}
\put(55,5){\circle*{0.5}}\put(54,2){\small{$2$}}
\put(60,10){\line(-1,-2){2.5}}
\put(57.5,5){\circle*{0.5}}\put(57,2){\small{$3$}}
\put(60,10){\line(1,-2){2.5}}
\put(62.5,5){\circle*{0.5}}\put(62,2){\small{$4$}}
\put(62.5,5){\circle{2}}
\put(65,7.5){\vector(1,0){5}}
\put(55,-1){\small{$v_4=4$}}
%%%%%%%%%%%%%%%%%%
\put(77,10){\line(-1,-1){5}}
\put(77,10){\circle*{0.5}}\put(78,11){\small{$1$}}
\put(72,5){\circle*{0.5}}\put(71,2){\small{$2$}}
\put(77,10){\line(-1,-2){2.5}}
\put(74.5,5){\circle*{0.5}}\put(74,2){\small{$3$}}
\put(77,10){\line(1,-2){2.5}}
\put(79.5,5){\circle*{0.5}}\put(79,2){\small{$4$}}
\put(77,10){\line(1,-1){5}}\put(82,5){\circle*{0.5}}
\put(82,2){\small{$5$}}\put(82,5){\circle{2}}
\put(73,-1){\small{$v_5=5$}}
\end{picture}
\end{center}
Lemma~\ref{L:gut} provides the upper bound $\sum_{i=2}^5{\rm diam}(
\mathcal{T}_i)=7$, where ${\rm diam}(\Gamma(S_5,P_5))=6$ and the
distance between $id=(1,2,3,4,5)$ and $(1,3,2,5,4)$ is the diameter
of $\Gamma(S_5,P_5)$.

%%%%%%
%%%%%%%%%%%%%%%%%%%%%%%%%%%%%%%%%%%%%%%%%%%%%%
%%%%%%
We next discuss the bubble-sort graph, $T_n=B_n=\{(i\,i+1)\mid 1\le i\le n-1\}$.
In view of
\begin{equation}
\{1\}\subset \{(1\,2)\}\subset \{(1\,2),(2\,3)\}\subset \dots\subset
\{(i\,i+1)\mid 1\le i\le n-1\}
\end{equation}
we arrive at $(v_i\,s_i)=(i\,i-1)$ and ${\rm diam}(\Gamma(S_n,B_n))=
\binom{n}{2}$.

{\bf Example $2$.}
In order to make the above explicit we consider the Cayley graph
$\Gamma(S_5,B_5)$ and generate the trees $\{\mathcal{T}_i\}_{i=1}^5$
inductively. Setting $\mathcal{T}_1=\varnothing$ and $v_1=1$,
we select the minimal element in distance $1$ to $v_1$ and set
$v_2=2$, $\mathcal{T}_2=\{(1\,2)\}$. We proceed by selecting the
minimal element in distance $1$ to the vertex set $\{1,2\}$ and set
$v_3=3$, $\mathcal{T}_3=\{(1\,2),(2\,3)\}$. Finally we select the
minimal element in distance $1$ to the vertex set $\{1,2,3\}$ and
set $v_4=4$, $\mathcal{T}_4=\{(1\,2),(2\,3),(3\,4)\}$. Then $v_5=5$
is the minimal element in distance $1$ to the vertex set $\{1,2,3,4\}$ and
$\mathcal{T}_5=\{(1\,2),(2\,3),(3\,4),(4\,5)\}$.
%%%%%%%%
%%%%%%%%%%%%%%%%%%%%%%%%%%%%%%%%%%%%%%%%%%%%%%%%%%%%%%%%
%%%%%%%%
\begin{center}
\setlength{\unitlength}{4pt}
\begin{picture}(100,20)(-10,-5)
\put(0,10){\line(0,1){5}}\put(0,10){\circle*{0.5}}
\put(0,15){\circle*{0.5}}\put(-2,9){\small{$2$}}
\put(-2,14){\small{$1$}}\put(0,10){\line(0,-1){5}}
\put(0,5){\circle*{0.5}}\put(-2,4){\small{$3$}}
\put(0,5){\line(0,-1){5}}
\put(0,0){\circle*{0.5}}\put(-2,-1){\small{$4$}}
\put(0,0){\line(0,-1){5}}
\put(0,-5){\circle*{0.5}}\put(-2,-6){\small{$5$}}
\put(4,6){is generated via}
%%%%%%%%%
\put(27,5){\line(0,1){5}}\put(27,5){\circle*{0.5}}
\put(27,10){\circle*{0.5}}\put(24,4){\small{$2$}}
\put(24,9){\small{$1$}}\put(27,5){\circle{2}}
\put(30,7){\vector(1,0){5}}
\put(22,0){$v_2=2$}
%%%%%%%%%
\put(40,10){\line(0,1){5}}\put(40,10){\circle*{0.5}}
\put(40,15){\circle*{0.5}}\put(37,9){\small{$2$}}
\put(37,14){\small{$1$}}\put(40,10){\line(0,-1){5}}
\put(40,5){\circle*{0.5}}\put(37,4){\small{$3$}}
\put(40,5){\circle{2}}
\put(43,7){\vector(1,0){5}}
\put(35,0){$v_3=3$}
%%%%%%%%%
\put(53,10){\line(0,1){5}}\put(53,10){\circle*{0.5}}
\put(53,15){\circle*{0.5}}\put(50,9){\small{$2$}}
\put(50,14){\small{$1$}}\put(53,10){\line(0,-1){5}}
\put(53,5){\circle*{0.5}}\put(50,4){\small{$3$}}
\put(53,5){\line(0,-1){5}}
\put(53,0){\circle*{0.5}}\put(50,-1){\small{$4$}}
\put(53,0){\circle{2}}
\put(56,7){\vector(1,0){5}}
\put(48,-5){$v_4=4$}
%%%%%%%%%
\put(66,10){\line(0,1){5}}\put(66,10){\circle*{0.5}}
\put(66,15){\circle*{0.5}}\put(63,9){\small{$2$}}
\put(63,14){\small{$1$}}\put(66,10){\line(0,-1){5}}
\put(66,5){\circle*{0.5}}\put(63,4){\small{$3$}}
\put(66,5){\line(0,-1){5}}
\put(66,0){\circle*{0.5}}\put(63,-1){\small{$4$}}
\put(66,0){\line(0,-1){5}}
\put(66,-5){\circle*{0.5}}\put(63,-6){\small{$5$}}
\put(66,-5){\circle{2}}
\put(63,-10){$v_5=5$}
%%%%%%%%%
\end{picture}
\end{center}
%%%
%%%%%%%%%%%%%%%%%%%%%%%%%%%%%%%%%%%%%%%%%%%%%%%%%%%%%%%%%%%%%%%%%%%%%%%%%%%%%
%%%
Lemma~\ref{L:gut} provides the upper bound $\sum_{i=2}^5{\rm diam}(
\mathcal{T}_i)=10$, and ${\rm diam}(\Gamma(S_5,B_5))=10$. The
distance between $id=(1,2,3,4,5)$ and $(5,4,3,2,1)$ is the diameter
of $\Gamma(S_5,B_5)$.

%%%
%%%%%%%%%%%%%%%%%%%%%%%%%%%%%%%%%%%%%%%%%%%%%%%%%%%%%%%%%%%%%%%%%%%%%%%%%%%%%%%%
%%%
\begin{lemma}\label{L:minigene}
Suppose $T_n$ is a minimal generating set of transpositions. We select
permutations with independent probability
$\lambda_n=\frac{1+\epsilon_n}{n-1}$,
where $n^{-\frac{1}{3}+\delta}\le \epsilon_n$, for some $\delta>0$. Then each
permutation, $v$, is contained in a $\Gamma_n$-subtree $\mathcal{T}_n(v)$
of size $\lfloor\frac{1}{4}n^{\frac{2}{3}}\rfloor$ with probability at least
$\wp(\epsilon_n)$.
\end{lemma}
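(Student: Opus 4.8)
The plan is to couple the breadth-first exploration of the component of a fixed vertex $v$ in $\Gamma_n$, run for $\lfloor\frac14 n^{2/3}\rfloor$ steps, with the branching process $\mathcal{P}_0(\lambda_n)$ of Section~\ref{S:2}, and then invoke Corollary~\ref{C:1} to identify its survival probability with $\wp(\epsilon_n)$. First I would, without loss of generality, set $v=id$ (vertex-transitivity of the Cayley graph) and start a breadth-first search in $\Gamma(S_n,T_n)$ from $id$, revealing at each step whether the newly encountered neighbours are selected (each independently with probability $\lambda_n$). A vertex $\sigma$ at BFS-distance $t$ from $id$ has exactly $n-1$ $\Gamma(S_n,T_n)$-neighbours, but some of these have already been visited; the number of \emph{new} neighbours is at least $n-1-t$, since — using part~(2) of Lemma~\ref{L:gut} — moving along distinct tree-edges $(v_j\,s_j)$ changes coordinate $v_i$ in a way that forbids returning to an already-explored level too quickly, so at depth $t$ at most $t$ of the $n-1$ generators lead ``backwards''. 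Hence the exploration tree stochastically dominates a Galton--Watson process whose root has $\mathrm{Bin}(n-1,\lambda_n)$ offspring and every subsequent individual at depth $t\le\lfloor\frac14 n^{2/3}\rfloor$ has at least $\mathrm{Bin}(n-1-\lfloor\frac14 n^{2/3}\rfloor,\lambda_n)$ offspring.

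Next I would compare this truncated, slightly-thinned process with $\mathcal{P}_0(p)$ for $p=\lambda_n=\frac{1+\epsilon_n}{n-1}$. Two losses must be controlled: (i) truncating at $T:=\lfloor\frac14 n^{2/3}\rfloor$ generations rather than letting the process run to infinity, and (ii) replacing $n-1$ by $n-1-T$ in the offspring parameter. For (ii), the mean offspring drops from $1+\epsilon_n$ only to $(1+\epsilon_n)\bigl(1-\frac{T}{n-1}\bigr)=1+\epsilon_n-O(n^{-1/3})$, and since $\epsilon_n\ge n^{-1/3+\delta}$ this still exceeds $1+\tfrac12\epsilon_n$ for large $n$; a standard monotonicity/continuity estimate for survival probabilities of near-critical Galton--Watson processes (of the type underlying Corollary~\ref{C:1}) shows the survival probability changes only by a factor $1+o(1)$. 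For (i), I would use the fact that a supercritical (or near-critical with drift $\epsilon_n$) process that survives does so, with probability $1-o(1)$, already by generation $\asymp\epsilon_n^{-2}\log(1/\epsilon_n)$ or reaches population $\gg 1$ that then cannot die in few steps; since $\epsilon_n^{-2}=O(n^{2/3-2\delta})=o(T)$, truncating at $T$ generations costs only a $1-o(1)$ factor. Combining, the probability that the BFS-tree reaches size $\ge T$ within $T$ generations is at least $(1+o(1))\pi_0(\lambda_n)$, and by Corollary~\ref{C:1} this equals $(1+o(1))\wp(\epsilon_n)$ in both regimes $\epsilon_n=\epsilon$ and $\epsilon_n=o(1)$.

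Finally I would note that if the BFS-tree has not yet terminated after $T$ generations then it has \emph{already} accumulated $\ge T=\lfloor\frac14 n^{2/3}\rfloor$ distinct selected vertices (one new level each generation suffices, and in fact the population is growing), so the component of $v$ in $\Gamma_n$ contains a subtree on $\lfloor\frac14 n^{2/3}\rfloor$ vertices; extracting a spanning subtree of the first $\lfloor\frac14 n^{2/3}\rfloor$ discovered vertices gives the desired $\mathcal{T}_n(v)$. I expect the main obstacle to be the combinatorial input bounding collisions in the BFS, i.e.~justifying the ``at most $t$ backward generators at depth $t$'' claim uniformly over all vertices and all minimal transposition-generating sets $T_n$ — this is exactly where part~(2) of Lemma~\ref{L:gut} and the coordinate $v_i$ it singles out must be used carefully, since for general $T_n$ (unlike the star case) different branches of the tree $\mathcal{T}_n$ interact and a crude ``$n-1$ minus depth'' bound must be shown to be the right one; the branching-process comparison itself is then routine given Corollary~\ref{C:1}.
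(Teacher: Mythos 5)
Your high-level plan — compare a local exploration of $v$'s component with a near-critical branching process and then read off $\wp(\epsilon_n)$ from Corollary~\ref{C:1} — is the same as the paper's, and your handling of the two ``losses'' (truncation and reducing the offspring parameter from $n-1$ to $n-1-O(n^{2/3})$) matches the role Corollary~\ref{C:1}(2) plays there. But the combinatorial core of your argument has a genuine gap, which you yourself flag: the claim that a vertex at BFS-depth $t$ has at most $t$ ``backward'' (already-visited) neighbours among its $n-1$ Cayley-neighbours does not follow from Lemma~\ref{L:gut}(2), and is not true in the generality needed. In a transposition Cayley graph (girth $6$, bipartite) the already-revealed neighbours of a depth-$t$ vertex include not only its parent but also other neighbours at $\Gamma(S_n,T_n)$-distance $t-1$ from $id$ (there can be many of these, depending on the geodesic structure of the chosen tree $\mathcal{T}_n$) and, more importantly, siblings at depth $t+1$ already discovered from earlier depth-$t$ vertices; bounding this count uniformly over all $\sigma$ and over \emph{every} minimal transposition set $T_n$ is precisely the hard part, and nothing in Lemma~\ref{L:gut}(2) — which is a statement about one coordinate $v_i$ under two single generators applied to the \emph{same} $x$ — delivers it.

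The paper sidesteps BFS collisions entirely by building the tree differently. It first discards the ``last'' $\approx\frac12 n^{2/3}$ generators, working only with the subset $N=\{(v_j\,s_j)\mid j\le n-\frac12 n^{2/3}-1\}\subset T_n$, and then grows the tree so that \emph{each generator in $N$ is used at most once in the whole process} (it maintains the used set $U_j$ and, from the current vertex $l_j$, only examines neighbours $l_j\cdot r$ with $r\in N\setminus U_j$). Because $T_n$ is an independent (minimal) generating set, products of pairwise distinct generators coming from the tree growth are automatically distinct permutations — Lemma~\ref{L:gut}(2) is exactly what guarantees this — so there are \emph{no} collisions to bound: the construction produces a tree by fiat. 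The offspring count per step is then at least $n-\lfloor\frac34 n^{2/3}\rfloor-1$ (the size of $N$ minus the $\le\lfloor\frac14 n^{2/3}\rfloor$ generators consumed so far), giving an honest lower-bounding branching process with a clean, step-independent $B_m(\cdot,\lambda_n)$ offspring law, and the process is stopped when $|U_j|=\lfloor\frac14 n^{2/3}\rfloor-1$, so survival is literally the event that the tree reaches the target size. A further point worth knowing (not visible from the statement alone): the restriction to $N$ is not cosmetic — it reserves the labels $v_h$ with $h>n-\frac12 n^{2/3}-1$ to remain untouched by $\mathcal{T}_n(v)$, which is what makes the inductive expansion in Lemma~\ref{L:expand2} and the density argument in Lemma~\ref{L:dense-01} go through with disjoint subtrees. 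A straight BFS, even if the collision bound could be repaired, would not produce a tree with this reserved-label property, so the downstream lemmas would also need reworking.
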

%%%
%%%%%%%%%%%%%%%%%%%%%%%%%%%%%%%%%%%%%%%%%%%%%%%%%%%%%%%%%%%%%%%%%%%%%%%%%%%%%%
%%%
\begin{proof}
We construct the subtree $\mathcal{T}_n(v)$ by means of a branching
process \cite{Harris:63} within $\Gamma(S_n,T_n)$. Without loss of
generality, we may initiate the process at $id$ and have
$r_n=n-\frac{1}{2}n^{\frac{2}{3}}\in\mathbb{N}$.
We shall begin by specifying an appropriate move-set
(of transpositions) by which
the offspring of the branching process is being generated. To this end, let
\begin{equation*}
N=\{(v_j\,s_j)\mid 1\le j\le n-\frac{1}{2}n^{\frac{2}{3}}-1\}\subset
T_n.
\end{equation*}
Note that $N$ acts trivially on labels $v_h$ where
$h>n-\frac{1}{2}n^{\frac{2}{3}}-1$.\\
The process is defined as follows: we set $U_0=\varnothing\subset N$
and $M_0=L_0=\{id\}\subset S_n$. At step $(j+1)$,
suppose we are given $U_j\subset N$, $M_j$ and $L_j\subset S_n$.
%%%%%%%%%%%%%%%%%%%%%%%%%%%%%%%%
In case of $L_j=\varnothing$ or $\vert
U_j\vert=\lfloor\frac{1}{4}n^{\frac{2}{3}}\rfloor-1$ the process
stops.
%%%%%%%%%%%%%%%%%%%%%%%%%%%%%%%%
Otherwise, we consider the smallest element $l_j\in L_j$ and select
among its smallest $(n-\lfloor\frac{3}{4}n^{2/3}\rfloor-1)$
neighbors, contained in $N\setminus U_j$ with independent
probability $\lambda_n$.
%%%%%%%%%%%%%%%%%%%%%%%%%%%%%%%%
Let $x_1=l_j\,r_{x_1}$ be the first selected $l_j$-neighbor and
$r_{x_1}\in N\setminus U_j$. We then set $U_j(x_1)=U_j\dot\cup
\{r_{x_1}\}$ and proceed the selection with the smallest
$(n-\lfloor\frac{3}{4}n^{2/3}\rfloor-1)$ neighbors contained in
$N\setminus U_j(x_1)$ instead of those in $N\setminus U_j$. After
all $l_j$ neighbors are checked and given that $(x_1,\dots,x_s)$
have been subsequently selected, we set
\begin{eqnarray*}
U_{j+1} & = & U_j\dot\cup\{r_{x_1},\dots,r_{x_s}\} \\
L_{j+1} & = & (L_j\setminus \{l_j\})\cup\{x_1,\dots,x_s\}\\
M_{j+1} & = & M_j\dot  \cup\{ x_1,\dots,x_s\}.
\end{eqnarray*}
The minimality of $T_n$ and the fact that each $T_n$-element is used
at most once implies that this process generates a tree, i.e.~each
$M_{j+1}$-element is considered only once. Furthermore, in view of
\begin{equation}
\frac{1+\epsilon_n}{n-1}\cdot
\left(n-\lfloor\frac{3}{4}n^{\frac{2}{3}}\rfloor-1\right)>1.
\end{equation}
Relating our construction with the binomial branching process
$\mathcal{P}_m(\frac{1+\epsilon_n}{n-1})$, where $m=n-\lfloor\frac{3}{4}n^{
\frac{2}{3}}\rfloor-1$, we observe
\begin{eqnarray*}
\mathbb{P}\left(\vert
M_j\vert=\lfloor\frac{1}{4}n^{\frac{2}{3}}\rfloor\mid \text{\rm for
some $j$} \right)\ge
\pi_{m}\left(\frac{1+\epsilon_n}{n-1}\right)=\wp(\epsilon_n).
\end{eqnarray*}
Indeed, the above equation holds for
$\epsilon_n\ge n^{-\frac{1}{3}+\delta}$.
In case of $0<\epsilon_n=o(1)$ we notice
$\lfloor\frac{3}{4}n^{\frac{2}{3}}\rfloor=o(n\cdot\epsilon_n)$. Therefore
Lemma~\ref{C:1}, (2) implies
$\pi_m(\frac{1+\epsilon_n}{n-1})=(2+o(1))\epsilon_n=\wp(\epsilon_n)$.
In case of $0<\epsilon_n=\epsilon<1$, we consider the probability generating
functions for both: the binomial distribution,
$\mathcal{P}_m(\frac{1+\epsilon}{n-1})$ and the Poisson distribution,
$\mathcal{P}_P(1+\epsilon)$.
Let $f_{n-1}(s)$ be the probability generating function for the binomial
distribution ${\rm Bi}(n-1,\frac{1+\epsilon}{n-1})$ and $g_{1+\epsilon}(s)$
be the probability generating function for Poisson distribution
with parameter $\lambda=1+\epsilon$, i.e.
\begin{eqnarray*}
f_{n-1}(s) & = & \sum_{j=0}^{n-1} P(\xi_i^{(t)}=j)\cdot s^j \\
           & = & \sum_{j=1}^{n-1}\binom{n-1}{j}
               \left(\frac{1+\epsilon}{n-1}\right)^j
\left(1-\frac{1+\epsilon}{n-1}\right)^{n-j}s^j\\
          & = & \left[1-(1-s)\frac{1+\epsilon}{n-1}\right]^{n-1}\\
g_{1+\epsilon}(s) & = & \sum_{i=0}^{\infty}e^{-(1+\epsilon)}\cdot
                \frac{(1+\epsilon)^i}{i!}\cdot s^i=e^{(s-1)(1+\epsilon)}.
\end{eqnarray*}
Clearly, $f_{n-1}(1-s)=g_{1+\epsilon}(1-s)e^{O(\frac{1}{n-1})}$ and
$f_{m}(1-s)=f_{n-1}(1-s)\cdot (1-s\frac{1+\epsilon}{n-1})^{-\lfloor\frac{3}{4}
n^{\frac{2}{3}}\rfloor}$.
By studying the roots of $f_{m}(1-s)=1-s$, $f_{n-1}(1-s)=1-s$ and
$g_{1+\epsilon}(1-s)=1-s$, we derive
$$
\pi_m\left(\frac{1+\epsilon}{n-1}\right)=(1+o(1))\pi_{n-1}\left(\frac{1+\epsilon}{n-1}
\right)=(1+o(1))\pi_{P}(1+\epsilon)=\wp(\epsilon)
$$
and the lemma follows.
\end{proof}
%%%
%%%%%%%%%%%%%%%%%%%%%%%%%%%%%%%%%%%%%%%%%%%%%%%%%%%%%%%%%
%%%
For given $\delta$, by choosing $k$ sufficiently large, we proceed by
enlarging the trees of Lemma~\ref{L:minigene} to subcomponents of arbitrary
polynomial size.
We remark that Lemma~\ref{L:gut} is of central importance for the
construction of the subcomponents of Lemma~\ref{L:expand2}.

%%%
%%%%%%%%%%%%%%%%%%%%%%%%%%%%%%%%%%%%%%%%%%%%%%%%%%%%%%%%%%%%%%%%%%%%%%%
%%%
\begin{lemma}\label{L:expand2}
Given $k\ge 2$ and $\delta>0$, $\lambda_n=\frac{1+\epsilon_n}{n-1}$,
where $n^{-\frac{1}{3}+\delta}\le \epsilon_n$,
there exists a function $\theta_{n,k}$, with
the property $\theta_{n,k}\ge \frac{1}{4k(k+1)}n^{\delta}$.
Then each $\Gamma_n$-vertex is contained in a $\Gamma_n$-subcomponent of
size at least
$$
\frac{1}{2^{k+2}}
\cdot\left[\frac{1}{4k(k+1)}\right]^k\cdot n^{\frac{2}{3}+k\delta}
$$
with probability at least
\begin{equation}
\delta_k(\epsilon_n)=\wp(\epsilon_n)\, (1-e^{-\beta_{k,n}\theta_{n,k}}),
\end{equation}
where $0<\beta_{k,n}<1$ and $\epsilon_n\ge n^{-\frac{1}{3}+\delta}$.
\end{lemma}
%%%
%%%%%%%%%%%%%%%%%%%%%%%%%%%%%%%%%%%%%%%%%%%%%%%%%%%%
%%%
\begin{proof}
Without loss of generality we may assume $\pi=id$, $\mu_n\in\mathbb{N}$
and set for all $1\le m\le k$,
\begin{equation*}
A_{m}=\left\{(v_j^m\,s_j^m)\in T_n \mid 1\le j\le\mu_n\right\}.
\end{equation*}
where
$(v_j^m\,s_j^m)=(v_{r_n+j+(m-1)\mu_n-1}\,s_{r_n+j+(m-1)\mu_n-1})$ and
$r_n=n-\lfloor\frac{1}{2}n^{\frac{2}{3}}\rfloor$, see eq.~(\ref{E:part}).
That is, $A_{m}$ is the ``first'' (in the sense of the labeling
given by the sequence $(v_{r_n},v_{r_n+1},\dots,v_n)$) subset of
$T_n$-transpositions
that act on labels $v_i$, where $i\le r_n+m\mu_n-1$ for $1\le m\le k$.
Furthermore,  for $1\le m\le k$, $|A_m|=\mu_n=\lfloor \frac{1}{2k(k+1)}
n^{\frac{2}{3}}\rfloor$, see eq.~(\ref{E:part}).
We set $w_{j}^{(h)}=(v_j^h\,s_j^h)\in A_h$ and consider the branching
process of Lemma~\ref{L:minigene} at $\pi=id$, assuming that we obtain a tree
$T^1$ of size $\lfloor \frac{1}{4}n^{\frac{2}{3}}\rfloor$. Let
\begin{eqnarray*}
Y_1=\left|\{w_{i}^{(1)}\in A_1\mid \exists x\in T^1;x \cdot w_{i}^{(1)}
\in \Gamma_n\}\right|.
\end{eqnarray*}
According to Lemma~\ref{L:gut}
\begin{equation*}
\forall\, x,y\in T^1;\forall\, w_{i}^{(1)}\neq w_{r}^{(1)}\in A_1;\quad
x\cdot w_{i}^{(1)}\neq  y\cdot w_{r}^{(1)},
\end{equation*}
whence
\begin{eqnarray}\label{E:exp1Y1}
\mathbb{E}[Y_1]=\mu_n\cdot
\left(1-\left(1-\frac{1+\epsilon_n}{n-1}\right)^{
\frac{1}{4}n^{\frac{2}{3}}}\right) \sim
\mu_n\left(1-\exp(-(1+\epsilon_n)
\frac{1}{4}n^{-\frac{1}{3}})\right).
\end{eqnarray}
Using large deviation inequalities eq.~(\ref{E:lde}) \cite{Chernoff:52},
we conclude that $\beta_1=\frac{1}{8}>0$ satisfies
\begin{eqnarray*}
\mathbb{P}\left(Y_1<\frac{1}{2}\mathbb{E}[Y_1]\right)\le
\exp\left(-\beta_1\cdot \mathbb{E}[Y_1]\right).
\end{eqnarray*}
We select the smallest element, $x_{(i\,j)}$, from the set $\{x
\cdot w_{j}^{(1)}  \mid x\in T^1,x \cdot w_{j}^{(1)}\in\Gamma_n\}$
and start the branching process of Lemma~\ref{L:minigene} at
$x_{(i\,j)}$. As a result, we derive the tree $C_2(x_{(i\,j)})$ of
size $\lfloor \frac{1}{4}n^{\frac{2}{3}}\rfloor$ with probability at
least $\wp(\epsilon_n)$. However, note that $T^1 \cup
C_2(x_{(i\,j)})$ may not be tree any more. According to
Lemma~\ref{L:minigene}, the generation of this tree
$C_2(x_{(i\,j)})$ exclusively involves labels $v_j$ where $j\le
r_n-1$. Therefore, since any two smallest elements $x_{(i_1\,j_1)}$
and $x_{(i_2\,j_2)}$ differ in at least one of two coordinates with
labels $v_{j_1},v_{j_2}$ for $r_n\le j_1,j_2\le r_n+\mu_n$, we have
\begin{equation*}
C_2(x_{(i_1\,j_1)})\cap C_2(x_{(i_2\,j_2)})=\varnothing.
\end{equation*}
Let $X_1$ be the r.v.~counting the number of these new
$\Gamma_n$-subcomponents. In view of eq.~(\ref{E:exp1Y1}), we obtain
\begin{eqnarray*}
\mathbb{E}[X_1]=\wp(\epsilon_n)\cdot
\mathbb{E}[Y_1]\sim \wp(\epsilon_n)\cdot\mu_n
\left(1-\exp(-(1+\epsilon_n)\frac{1}{4}n^{-\frac{1}{3}})\right).
\end{eqnarray*}
In order to make the dependence of
$\theta_{n,k}=\wp(\epsilon_n)\cdot\mu_n\left(1-\exp(-(1+\epsilon_n)
\frac{1}{4}n^{-\frac{1}{3}})\right)$ for fixed $\delta>0$ on $k$ and $n$
explicit, we compute
\begin{eqnarray*}
\theta_{n,k} &\ge &
2\cdot n^{-\frac{1}{3}+\delta}\cdot \frac{1}{2k(k+1)}n^{\frac{2}{3}}\cdot
(1+n^{-\frac{1}{3}+\delta})\cdot\frac{1}{4}\cdot n^{-\frac{1}{3}}-o(1)\\
& = &\frac{1}{4k(k+1)}\cdot n^{\delta} \quad\mbox{ as } n\rightarrow \infty.
\end{eqnarray*}

Again, using large deviation inequalities eq.~(\ref{E:lde}), we
conclude that $\beta_{1}=\frac{1}{8}>0$ satisfies
\begin{equation*}
\mathbb{P}(X_1<\frac{1}{2}\theta_{n,k})\le \exp(-\beta_{1}\theta_{n,k})
\end{equation*}
or equivalently, since the union of all the
$C_2(x_{(i\,j)})$-subcomponents with $T^1$ forms a
$\Gamma(S_n,T_n)$-subcomponent, $T^2$, we have
\begin{equation}
\mathbb{P}\left(\vert T^2\vert <
\lfloor \frac{1}{4}n^{2/3}\rfloor \cdot \frac{1}{2}\theta_{n,k}\right)
\le \exp(-\beta_{1}\theta_{n,k}).
\end{equation}
We now proceed by induction:\\
%%%
%%%%%%%%%%%%%%%%%%%%%%%%%%%%%%%%%%%%%%%%%%%%%%%%%%%%%%%%%%%%%%%%%%%%%%%%%%
%%%
{\it Claim:} For each $2\le i\le k$, there exists some constant
$\beta_{i,n}>0$ and a $\Gamma(S_n,T_n)$-subcomponent $T^i$ such that
\begin{eqnarray*}
\mathbb{P}(\vert T^i\vert <\lfloor \frac{1}{4}n^{2/3}\rfloor \cdot
\left({\frac{\theta_{n,k}}{2}}\right)^{i-1})\le
\exp(-\beta_{i-1,n}\theta_{n,k}).
\end{eqnarray*}
%%%
%%%%%%%%%%%%%%%%%%%%%%%%%%%%%%%%%%%%%%%%%%%%%%%%%%%%%%%%%%%%%%%%%%%%%%%%%%
%%%
We have already established the induction basis. As for the
induction step, let us assume the claim holds for $i<k$ and
let $C_i(\alpha)$ denote a subcomponent generated by the branching process
of Lemma~\ref{L:minigene} in the $i$-th step.
We consider the $T_n$-transpositions $w_{r}^{(i+1)}\ne w_{a}^{(i+1)}\in
A_{i+1}$. We consider the minimal elements, $x^\alpha_{r}$ of
\begin{eqnarray*}
Y_{i+1}=\{w_{r}^{(i+1)}\in A_{i+1}\mid \exists\, x\in C_i(\alpha); x\cdot
w_{r}^{(i+1)}\in \Gamma_n\}
\end{eqnarray*}
at which we initiate the branching process of
Lemma~\ref{L:minigene}. The process generates subcomponents
$C_{i+1}(x^\alpha_{r})$ of size $\lfloor
\frac{1}{4}n^{\frac{2}{3}}\rfloor$ with probability $\ge
\wp(\epsilon_n)$. Any two of these are mutually disjoint and let
$X_{i+1}$ be the r.v.~counting their number. We derive setting
$q_n=\lfloor \frac{1}{4}n^{2/3}\rfloor$. In order to make the dependence of
$\beta_{i,n}$ for fixed $\delta>0, k\ge 2$ on $n$ and $i$ explicit, we
set $\beta_{1,n}=\beta_1=\frac{1}{8}$ and recursively define
$\beta_{i,n}$ for $i\ge 2$,
$$
\beta_{i,n}=\beta_{i-1,n}-
\frac{\ln(1+\exp(-\beta_1\theta_{n,k}^{i-1}+\beta_{i-1,n}
\theta_{n,k}))}{\theta_{n,k}}=\beta_{i-1,n}+o(1)\,\,\,\mbox{ for }
k\ge i\ge 2
$$
We compute
\begin{eqnarray*}
\mathbb{P}\left(\vert T^{i+1}\vert < q_n
\frac{1}{2^{i}}\theta_{n,k}^{i}\right) & \le &
\underbrace{\mathbb{P}\left(
\vert T^i\vert <q_n \frac{1}{2^{i-1}}\theta_{n,k}^{i-1}\right)
}_{\text{\rm failure at step $i$}}
\ + \\
&& \underbrace{ \mathbb{P}\left(\vert T^{i+1}\vert <
q_n\frac{1}{2^{i}}\theta_{n,k}^{i}\mbox{ and }
\vert T^i\vert \ge  q_n\frac{1}{2^{i-1}}\theta_{n,k}^{i-1} \right)
}_{\text{\rm failure at step $i+1$ conditional to
$\vert T^i\vert \ge  q_n\frac{1}{2^{i-1}}\theta_{n,k}^{i-1}$}} \\
 & \le & \underbrace{e^{-\beta_{i-1,n}\,\theta_{n,k}}}_{\text{\rm induction
hypothesis}} +
\underbrace{e^{-\beta_1 \,\theta_{n,k}^{i}}}_{\text{\rm large deviation
results}}\cdot
(1-e^{-\beta_{i-1,n}\, \theta_{n,k}})\, , \\
& \le & e^{-\beta_{i,n}\,\theta_{n,k}}
\end{eqnarray*}
and the Claim follows.\\
Therefore, each $\Gamma_n$-vertex is contained in a subcomponent of
size
\begin{equation*}
\ge \frac{1}{4}\cdot n^{\frac{2}{3}}\cdot\frac{1}{2^k}\cdot
\left[\frac{1}{4k(k+1)}\right]^k\cdot n^{k\delta}=\frac{1}{2^{k+2}}
\cdot\left[\frac{1}{4k(k+1)}\right]^k\cdot n^{\frac{2}{3}+k\delta},
\end{equation*}
with probability at least
$\wp(\epsilon_n)(1-e^{-\beta_{k,n}\theta_{n,k}})$ and the lemma is proved.
\end{proof}

%%%
%%%%%%%%%%%%%%%%%%%%%%%%%%%%%%%%%%%%%%%%%%%%%%%%%%%%%%%%%%%%%%%%%%%%%%%%%%%%%%%%
%%%
\section{Vertices in small components}\label{S:4}
%%%
%%%%%%%%%%%%%%%%%%%%%%%%%%%%%%%%%%%%%%%%%%%%%%%%%%%%%%%%%%%%%%%%%%%%%%%%%%%%
%%%
For given $0<\delta<1$, let
\begin{equation}\label{E:polysize}
M_{k}(n)=
\frac{1}{2^{k+2}}\left[\frac{1}{4k(k+1)}\right]^k n^{\frac{2}{3}+k\delta}.
\end{equation}
Let $\Gamma_{n,k}$ denote the set of $\Gamma_n$-vertices contained in
components of size $\ge M_k(n)$ for fixed $0<\delta<1$.
In this section we prove that $\vert \Gamma_{n,k}\vert$ is a.s.~$
\sim \wp(\epsilon_n) \frac{1+\epsilon_n}{n-1} n!$.
In analogy to Lemma~$3$ of \cite{Reidys:08rand} we first observe that the
number of vertices, contained in $\Gamma_n$-components of size
$<M_k(n)$, is sharply concentrated.
The concentration reduces the problem to a computation of expectation
values. It follows from considering the indicator r.vs. of
pairs $(C,v)$ where $C$ is a component and $v\in C$ and to estimate
their correlation. Since the components in question are small,
no ``critical'' correlation terms arise.

Let $U_n=U_n(a)$ denote the set of vertices contained in
components of size $< n^a$ where $a>0$.
Then following the arguments in \cite{Bollobas:92}
%%%
%%%%%%%%%%%%%%%%%%%%%%%%%%%%%%%%%%%%%%%%%%%%%%%%%%%%%%%%%%%%%%%%%%%%%%%%%%%
%%%
\begin{lemma}\label{L:comple1}
Let $a>0$ be a fixed constant. We are given $\delta>0$ and $\lambda_n=\frac{1+\epsilon_n}{n-1}$, where
$1>\epsilon_n\ge n^{-\frac{1}{3}+\delta}$. Then
\begin{equation}
\mathbb{P}\left(\vert\,\vert U_n \vert -\mathbb{E}[\vert
U_n\vert]\,\vert \ge \frac{1}{n} \mathbb{E}[\vert
U_n\vert]\right)=o(1).
\end{equation}
\end{lemma}
%%%
%%%%%%%%%%%%%%%%%%%%%%%%%%%%%%%%%%%%%%%%%%%%%%%%%%%%%%%%%%%%%%%%%%%%%%%%%%%
%%%
\begin{proof}
Let $I_{C,v}$, be the indicator r.v.~of the pair $(C,v)$, where $v\in C$ and
$C\in U_n$ is a component of size
$<n^a$. We have
\begin{eqnarray*}
|U_n|=\sum_{(C,v)}I_{C,v}.
\end{eqnarray*}
and we proceed by proving that the r.v. $|U_n|$ is sharply
concentrated by analyzing the correlation terms
$\mathbb{E}(I_{C_1,v}I_{C_2,w})$.
Correlation may arise in two ways: the pairs
$(C_1,v)$ and $(C_2,w)$ either satisfy $C_1=C_2$ or the minimal
distance, $d_{\Gamma(S_n,T_n)}(C_1,C_2)=2$.
Suppose first $C_1=C_2$, then
\begin{eqnarray*}
\sum_{(C,v)\sim(C,w)}\mathbb{E}(I_{C,v}I_{C,w})
&=&\sum_{(C,v)}\sum_{(C,w)\sim(C,v)}\mathbb{E}(I_{C,v})\\
&\le & \sum_{(C,v)}n^a \mathbb{E}(I_{C,v})=n^a
\mathbb{E}[|U_n|]
\end{eqnarray*}
Secondly we consider the case $C_1\ne C_2$.
Then there exist vertices $v\in C_1$ and $w\in C_2$ with
$d_{\Gamma(S_n,T_n)}(v,w)=2$, i.e.~we have an additional vertex
$u\not\in \Gamma_n$ which, if selected, would lead to a merger
of the subcomponents $C_1$ and $C_2$. Accordingly,
\begin{eqnarray*}
\mathbb{P}(\text{\rm $d(C_1,C_2)=2$}) & = &
\frac{(1-\lambda_n)}{\lambda_n}\ \mathbb{P}(C_1\cup C_2\cup \{u\}
\ \text{\rm is a
$\Gamma_n$-component}) \\
& \le & n \ \mathbb{P}(C_1\cup C_2\cup \{u\} \
\text{\rm is a $\Gamma_n$-component})
\end{eqnarray*}
and we derive, summing over all possible $v,w,u$, the upper bound
\begin{equation*}
\sum_{d(C_1,C_2)=2}\mathbb{E}[I_{C_1,v_1}\,I_{C_2,v_2}]\le
n\, (2n^a+1)^3 \, \vert\Gamma_n\vert  .
\end{equation*}
The uncorrelated pairs $(I_{C_1,v_1},I_{C_2,v_2})$ can be estimated
by
\begin{equation*}
\sum_{(C_1,v_1)\not\sim
(C_2,v_2)}\mathbb{E}[I_{C_1,v_1}\,I_{C_2,v_2}]=\sum_{(C_1,v_1)\not\sim
(C_2,v_2)}\mathbb{E}[I_{C_1,v_1}]\cdot \mathbb{E}[I_{C_2,v_2}]\le
\mathbb{E}[|U_n|]^2 .
\end{equation*}
Consequently we arrive at
\begin{eqnarray*}
\mathbb{E}[U_n(U_n -1)] & = &
\sum_{\substack{(C,v_1)\\\sim
(C,v_2)}}\mathbb{E}[I_{C,v_1}\,I_{C,v_2}]+
\sum_{\substack{(C_1,v_1)\\\sim
(C_2,v_2)}}\mathbb{E}[I_{C_1,v_1}\,I_{C_2,v_2}] +
\sum_{\substack{(C_1,v_1)\\\not\sim
(C_2,v_2)}}\mathbb{E}[I_{C_1,v_1}\,I_{C_2,v_2}]\\
& \le & n^a\; \mathbb{E}[|U_n|] + n\,
(2n^a+1)^3 \vert\Gamma_n\vert + \mathbb{E}[|U_n|]^2 .
\end{eqnarray*}
Just considering isolated vertices implies $\mathbb{E}[U_n]\ge
c\,\vert \Gamma_n\vert$ for some $c>0$, i.e.~the expected number
of vertices in small components grows faster than any polynomial.
Employing Chebyshev's inequality, eq.~(\ref{E:cheby}), we derive
\begin{eqnarray*}
\mathbb{P}\left(\vert |U_n| -\mathbb{E}[|U_n|]\vert
\ge \frac{1}{n}\, \mathbb{E}[|U_n|] \right)
&\le& n^2\, \frac{\mathbb{V}[
|U_n|]}{\mathbb{E}[|U_n|]^2}\\
&=& n^2\frac{\mathbb{E}[|U_n|(|U_n|-1)]+
\mathbb{E}[|U_n|]-\mathbb{E}[|U_n|]^2}
{\mathbb{E}[|U_n|]^2}\\
&\le& n^2\frac{ n^a+
\frac{1}{c}\,n\, (2n^a+1)^3 +1}
{\mathbb{E}[|U_n|]}= o\left(\frac{1}{n^2}\right),
\end{eqnarray*}
whence the lemma.
\end{proof}
%%%
%%%%%%%%%%%%%%%%%%%%%%%%%%%%%%%%%%%%%%%%%%%%%%%%%%%%%%%%%%%%%%%%%%%%%%%%%%%
%%%
%%%
%%%%%%%%%%%%%%%%%%%%%%%%%%%%%%%%%%%%%%%%%%%%%%%%%%%%%%%%%%%%%%%%%%%%%%%%%%%

With the help of Lemma~\ref{L:comple1}, we proceed by computing the
size of $\Gamma_{n,k}$.
%%%
%%%%%%%%%%%%%%%%%%%%%%%%%%%%%%%%%%%%%%%%%%%%%%%%%%%%%%%%%%%%%%%%%%%%%%%%%%%
%%%
\begin{lemma}\label{L:size1}
Suppose $k\in\mathbb{N}$ is arbitrary but fixed and we are given $\delta>0$.
Let $\omega_n=|\Gamma_n\backslash\Gamma_{n,k}|$ and $\lambda_n=
\frac{1+\epsilon_n}{n-1}$, where $n^{-\frac{1}{3}+\delta}\le\epsilon_n <1$.
Then
\begin{equation}
\vert \Gamma_{n,k}\vert \sim \wp(\epsilon_n)
\frac{1+\epsilon_n}{n-1} n! \ \qquad \text{\it a.s.~.}
\end{equation}
\end{lemma}
%%%
%%%%%%%%%%%%%%%%%%%%%%%%%%%%%%%%%%%%%%%%%%%%%%%%%%%%%%%%%%%%%%%%%%%%%%%%%%%
%%%
\begin{proof}
First we prove for any $n^{-\frac{1}{3}+\delta}\le \epsilon_n\le \lambda$,
where
$\lambda>0$
\begin{equation}
(1-o(1))\wp(\epsilon_n)\,\vert \Gamma_n\vert\le
\vert \Gamma_{n,k}\vert
\qquad \text{\rm  a.s.}
\end{equation}
By Lemma~\ref{L:expand2} we have
$$
\mathbb{E}[\omega_n]\le (1-\delta_{k}(\epsilon_n))
\vert\Gamma_n\vert.
$$
In view of Lemma~\ref{L:comple1}, we derive
\begin{equation*}
\omega_n < \left(1+O(\frac{1}{n})\right) \,\mathbb{E}[\omega_n]
<\left(1-\delta_k(\epsilon_n)+O(\frac{1}{n})\right)\vert
\Gamma_n\vert \quad \text{\rm a.s.,}
\end{equation*}
whence
\begin{eqnarray*}
\vert\Gamma_{n,k}\vert
\ge\left(\delta_k(\epsilon_n)-O(\frac{1}{n})\right)\vert
\Gamma_n\vert=(1-o(1))\wp(\epsilon_n)\vert\Gamma_n\vert\quad
\text{\rm a.s..}
\end{eqnarray*}
%%%
%%%%%%%%%%%%%%%%%%%%%%%%%%%%%%%%%%%%%%%%%%%%%%%%%%%%%%%%%%%%%%%%%%%%%%%%
%%%
Next we prove for $n^{-\frac{1}{3}+\delta}\le \epsilon_n < 1$ and
arbitrary but fixed $k$,
\begin{equation}\label{E:will0}
\vert\Gamma_{n,k}\vert\le (1+o(1))\wp(\epsilon_n)
\, \vert\Gamma_n\vert\qquad \text{\rm  a.s.}
\end{equation}
%%%
%%%%%%%%%%%%%%%%%%%%%%%%%%%%%%%%%%%%%%%%%%%%%%%%%%%%%%%%%%%%%%%%%%%%%%%%
%%%
Let $W_n=U_n(\frac{1}{2})=\{r\in \Gamma(S_n,T_n)\mid\vert C_{r}\vert
< n^{1/2}\}$,
where $C_{r}$ denotes a component containing $r$. Obviously,
$\Gamma_{n,k} \subset \Gamma_n\setminus
W_n$, whence it suffices to prove
\begin{equation}\label{E:will1}
\vert W_n \vert \ge  \left[1-(1+o(1))\wp(\epsilon_n)\right] \,
\vert\Gamma_n\vert\qquad \text{\rm  a.s.}
\end{equation}
For this purpose we follow \cite{Bollobas:91} and consider a certain
branching process in the $(n-1)$-regular rooted tree $T_{r^*}$.
Here the r.v. $\xi_r^*$ of
the rooted vertex $r^*$ is $\mbox{Bi}(n-1,\lambda_n)$ distributed while the
r.v. of any other vertex $r$ has the distribution
$\mbox{Bi}(n-2,\lambda_n)$. Let $C_{r^*}$ denote the component generated by
this branching process. The idea here is to relate $C_{r^*}$ with
its image under a covering map, i.e.~a specific $\Gamma_n$-component
containing $r$, denoted by $C_r$. \\
Using the linear ordering on $\Gamma(S_n,T_n)$, one can specify a unique
procedure on how to generate an acyclic connected $\Gamma(S_n,T_n)$-subgraph
of size $<n^{1/2}$, denoted by $H_{r}^\dagger$ \cite{Bollobas:91}.
Let $S$ be a stack. We initialize by setting $H^{\dagger}_r=\{r\}$.
Then we select the $r$-neighbors in $\Gamma(S_n,T_n)$, one by one, in
increasing order, with probability $\lambda_n$. For each selected
neighbor $r_i$, we {\sf (a)} put the corresponding edge $\{r,r_i\}$
into $S$, {\sf (b)} add $r_i$ to $H^{\dagger}_r$ and {\sf (c)} check
condition {\sf (h1)} ``$\vert H^{\dagger}_r\vert=n^{\frac{1}{2}}$''.
If {\sf (h1)} holds we stop, otherwise we proceed examining the next
$r$-neighbor. Suppose
{\sf (h1)} does not hold and all $r$-neighbors have been examined. \\
If $S$ is empty, we stop. Otherwise we proceed inductively as follows:
we remove the first element, $\{r,w\}$ from $S$ and consider the
$w$-neighbors, except $r$, one by one, in increasing order.
For each selected $w$-neighbor, $x$, we
{\sf (a)} insert the edge $\{w,x\}$ into the back of $S$
{\sf (b)} add $x$ to $H^\dagger_r$ and {\sf (c)} check condition {\sf (h1)}
``$\vert H^{\dagger}_r\vert=n^{\frac{1}{2}}$''and {\sf (h2)}
``$H^{\dagger}_r$
contains a cycle''.
In case {\sf (h1)} or {\sf (h2)} holds we stop.
Otherwise, we continue examining $w$-neighbors in increasing order
until all $w$-neighbors are considered.
If $S$ is empty we stop and otherwise we consider the next element from
$S$ and iterate the process.\\
Consequently we have by construction
\begin{equation}
\forall m\le n^{\frac{1}{2}};\quad
\mathbb{P}\left(\vert H_{r}^\dagger\vert < m \mbox{ and } H^\dagger_r
\mbox{ is a acyclic}\right) \le\mathbb{P}\left(\vert C_{r^*}\vert <
m\right),
\end{equation}
where the discrepancy between $\mathbb{P}\left(\vert
H^\dagger_{r}\vert < m \mbox{ and } H^\dagger_r \mbox{ is a
acyclic}\right)$ and $\mathbb{P}\left(\vert C_{r^*}\vert <
m\right)$ lies in those events for which a $\le$-compatible covering map from
$T_{r^*}$ into $\Gamma(S_n,T_n)$, mapping $r^*$ into $r$, produces
a cycle in $\Gamma(S_n,T_n)$. The latter is bounded from above by the
probability
$\mathbb{P}\left(H_{r}^\dagger\mbox{ contains a cycle}\right)$. Therefore,
\begin{equation}\label{E:retrees}
\forall m\le n^{\frac{1}{2}};\quad
\mathbb{P}\left(\vert H_{r}^\dagger\vert < m \mbox{ and } H^\dagger_r
\mbox{ is a acyclic}\right) \ge \mathbb{P}\left(\vert C_{r^*}\vert <
m\right)-\mathbb{P}\left(H_{r}^\dagger\mbox{ contains a cycle}\right).
\end{equation}
We proceed by computing $\mathbb{P}\left(\vert C_{r^*}\vert < m\right)$
and $\mathbb{P}(H_r^\dagger\ \text{\rm contains a cycle})$. \\
%%%
%%%%%%%%%%%%%%%%%%%%%%%%%%%%%%%%%%%%%%%%%%%%%%%%%%%%%%%%%%%%%%%%%%%%%%%%%
%%%
{\it Claim $1$}:\cite{Bollobas:91} there exists some $\kappa>0$ such that
\begin{eqnarray}
\mathbb{P}(\vert C_{r^*}\vert<
n^{1/2}) &\ge &
1-\pi_0(\epsilon_n)-o(e^{-\kappa\,n^{1/2}}).
\end{eqnarray}
%%%
%%%%%%%%%%%%%%%%%%%%%%%%%%%%%%%%%%%%%%%%%%%%%%%%%%%%%%%%%%%%%%%%%%%%%%%%%
%%%
To prove the claim we compute
\begin{eqnarray*}
\mathbb{P}(n^{1/2}\le \vert C_{r^*}\vert < \infty) &=& \sum_{i\ge
n^{1/2}}\mathbb{P}(\vert C_{r^*}\vert=i)\\
& = & \sum_{i\ge n^{1/2}} (1 + o(1))\cdot \frac{(\lambda_n\cdot
(n-2))^{i-1}}{i\sqrt{2\pi i}}
\left[\frac{(n-2)(1-\lambda_n)}{(n-3)}\right]^{ni-3i+2}\\
& \le & \sum_{i\ge n^{1/2}}\left[(1+\epsilon_n)e^{-\epsilon_n}
\right]^i\le \sum_{i\ge n^{1/2}} c(\epsilon)^i =o(e^{-\kappa n^{1/2}}),
\end{eqnarray*}
where $0<c(\epsilon)<1$ and
\begin{eqnarray}\label{E:ts}
\mathbb{P}(\vert C_{r^*}\vert=i)=(1+o(1))\cdot\frac{(\lambda_n\cdot
(n-2))^{i-1}}{i\sqrt{2\pi
i}}\left[\frac{(n-2)(1-\lambda_n)}{(n-3)}\right]^{ni-3i+2},
\end{eqnarray}
where $i=i(n)\rightarrow\infty$ as $n\rightarrow\infty$ is due to
\cite{Bollobas:91}.
We accordingly derive
\begin{eqnarray}
\mathbb{P}(\vert C_{r^*}\vert <  n^{1/2}) &= &
\mathbb{P}(\vert C_{r^*}\vert <\infty) -
\mathbb{P}(n^{1/2} \le  \vert C_{r^*}\vert < \infty)\nonumber\\
\label{E:lowertree}
&\ge&
1-\underbrace{\wp(\epsilon_n)}_{=\pi_0(\frac{1+\epsilon_n}{n-1})}-
o(e^{-\kappa n^{1/2}}),
\end{eqnarray}
where $\pi_0(\frac{1+\epsilon_n}{n-1})=\wp(\epsilon_n)=
\mathbb{P}(\vert C_{r^*}\vert =\infty)$ is the
survival probability of the branching process in $T_{r^*}$, which constructs
the component rooted in $r^*$, see Lemma~\ref{C:1}.\\
%%%
%%%%%%%%%%%%%%%%%%%%%%%%%%%%%%%%%%%%%%%%%%%%%%%%%%%%%%%%%%%%%%%%%%%%%%%%%
%%%
{\it Claim $2$}:
$\mathbb{P}(H_r^\dagger\ \text{\rm contains a cycle})\le
O(n^{-\frac{1}{2}})$.\\
%%%
%%%%%%%%%%%%%%%%%%%%%%%%%%%%%%%%%%%%%%%%%%%%%%%%%%%%%%%%%%%%%%%%%%%%%%%%%
%%%
Let $\ell$ denote the length of a cycle, $\mathcal{O}_{\ell}$, generated by
$H_r^{\dagger}$.  We first notice that $\mathcal{O}_{\ell}$ contains at
most $\lfloor\frac{\ell}{2}\rfloor$ distinct $T_n$-elements.
Otherwise $\mathcal{O}_{\ell}=(\sigma_s)_{s=1}^{\ell}$ contains
$\lfloor\frac{\ell}{2}\rfloor+1$ distinct $T_n$-transpositions and
consequently there exists at least one transposition $\sigma_{t}=(i\,j)\in
\mathcal{O}_{\ell}$ that occurs only once.
Then we conclude, using $\prod_{s=1}^{\ell}\sigma_s=1$,
\begin{equation*}
(i\, j)\in \langle T_n\setminus \{(i\, j)\}\rangle,
\end{equation*}
which is impossible since $T_n$ is a minimal generating set. Let $N$
be the number of distinct transpositions in
$\mathcal{O}_{\ell}$ and $a_s$ be
the multiplicity of $s$-th distinct transposition. We then have
$a_s\ge 2$ for $1\le s\le N$ and
$N\le \lfloor\frac{\ell}{2}\rfloor$. We notice that the number of such
cycles $\mathcal{O}_{\ell}$, that contain a fixed vertex is bounded from
above by
\begin{eqnarray*}
&&
\binom{n-1}{N}\cdot\frac{\ell!}{a_1!\cdot a_2!\cdots a_N!}\\
&\le&\binom{n-1}{N}\cdot\frac{{\ell!}}{2^N}\le
\left(\frac{n-1}{2}\right)^N\frac{{\ell!}}{N!}
\le
\left(\frac{n-1}{2}\right)^{{\lfloor\frac{\ell}{2}\rfloor}}
{\frac{\ell!}
{(\lfloor\frac{\ell}{2}\rfloor)!}}=
{O\left(\frac{\ell(n-1)}{e}\right)^{\lfloor\frac{\ell}{2}\rfloor}}
\end{eqnarray*}
We next distinguish the cases of whether or not $\mathcal{O}_{\ell}$
contains $r$. Let us first assume $r\not\in\mathcal{O}_{\ell}$. Then all
vertices except of the lastly added vertex $w$, have been examined
only once while $w$ has been examined for at most $n^{\frac{1}{2}}-1$
times. Therefore the probability of $\mathcal{O}_{\ell}$ is bounded by
\begin{eqnarray*}
\le n^{\frac{1}{2}}\cdot
\ell\cdot\left(\frac{n-1}{2}\right)^{\lfloor\frac{\ell}{2}\rfloor}
\frac{\ell!}{(\lfloor\frac{\ell}{2}\rfloor)!}\cdot
\left({\frac{2}{n-1}}\right)^{\ell-1} \frac{2}{n-1}\cdot
\left(n^{\frac{1}{2}}-1\right) =O\left(\ell
n\cdot\left(\frac{4\ell}{e(n-1)}\right)^{\lfloor\frac{\ell}{2}\rfloor}\right).
\end{eqnarray*}
Taking the sum over all possible values $4\le \ell\le n^{\frac{1}{2}}$, we
observe that the probability of the event that $H^\dagger_r$
contains such a cycle, is at most $O(n^{-1})$. \\
Suppose next $r\in \mathcal{O}_{\ell}$. Then $r$ has by construction never
been examined. The lastly added vertex (the one leading to the cycle and
therefore to the halting of the process) has been examined at most
$n^{\frac{1}{2}}-1$ times and all other vertices contained in
$\mathcal{O}_{\ell}$ have been examined only once.
Therefore the probability of $\mathcal{O}_{\ell}$ is bounded by
\begin{eqnarray*}
\le \ell\cdot\left(\frac{n-1}{2}\right)^{\lfloor\frac{\ell}{2}\rfloor}
\frac{\ell!}{(\lfloor\frac{\ell}{2}\rfloor)!}\cdot
\left({\frac{2}{n-1}}\right)^{\ell-2} \frac{2}{n-1}\cdot
\left(n^{\frac{1}{2}}-1\right) =O\left(\ell
n^{\frac{3}{2}}\cdot\left(\frac{4\ell}{e(n-1)}\right)^{
\lfloor\frac{\ell}{2}\rfloor}\right).
\end{eqnarray*}
Taking the sum over $4\le \ell\le n^{\frac{1}{2}}$, we conclude that the
probability of the event that $H^\dagger_r$ contains a cycle that contains
$r$, is at most $O(n^{-\frac{1}{2}})$ and Claim $2$ follows. \\
%%%%%%%%%%%%%%%%%%%%%%%%%%%%%%%%%%%%%%%%%%%%%%%%%%%%%%%%%%%%%
{\it Claim $3$:}
\begin{equation}
\mathbb{P}\left(\vert C_{{r}}\vert <n^{\frac{1}{2}} \right) \ge
1-(1+o(1)) \wp(\epsilon_n).
\end{equation}
Let $D_r$ be a tree containing $r$ of size $<n^{\frac{1}{2}}$ in
$\Gamma_n$. Since there is only one way by which the procedure
$H_r^\dagger$ can generate $D_r$ we have
\begin{equation}\label{E:smc001}
\mathbb{P}\left(C_{r}=D_r\right) \ge
                     \mathbb{P}\left(H^\dagger_{r}=D_r \right).
\end{equation}
Consequently, taking the sum over all such trees we obtain
\begin{equation}\label{E:smc00}
\mathbb{P}\left(\vert C_{r}\vert <
n^{\frac{1}{2}} \mbox{ and } \text{\rm
$C_r$ is a tree}\right)\ge \mathbb{P}\left(\vert H^\dagger_{r}\vert
< n^{\frac{1}{2}} \mbox{ and } \text{\rm
$H^\dagger_r$ is acyclic}\right) .
\end{equation}
According to eq.~(\ref{E:retrees}), Claim $1$, Claim $2$ and
$\wp(\epsilon_n)\ge n^{-1/3+\delta}$ we conclude
\begin{equation*}
\mathbb{P}\left(\vert H^\dagger_{r}\vert<
n^{\frac{1}{2}} \mbox{ and } \text{\rm
$H^\dagger_r$ is acyclic }\right) \ge 1-(1+o(1)) \wp(\epsilon_n).
\end{equation*}
Accordingly we arrive at
\begin{eqnarray*}
\mathbb{P}\left(\vert C_{r}\vert <
n^{\frac{1}{2}} \right) & \ge &
\mathbb{P}\left(\vert C_{r}\vert <
n^{\frac{1}{2}}  \mbox{ and } \text{\rm $C_r$ is a tree} \right) \\
& \ge & \mathbb{P}\left(\vert H^\dagger_{r}\vert<
n^{\frac{1}{2}} \mbox{ and }
\text{\rm $H^\dagger_r$ is acyclic}\right) \\
&\ge & 1-\wp(\epsilon_n)-o(e^{-\kappa n^{\frac{1}{2}}})-O(n^{-\frac{1}{2}})\\
&\ge & 1-(1+o(1)) \wp(\epsilon_n)
\end{eqnarray*}
and Claim $3$ is proved.
By linearity of expectation, we have $(1-(1+o(1))\wp(\epsilon_n))
\vert \Gamma_n\vert  \le \mathbb{E}[\vert W_n\vert]$ and
according to Lemma~\ref{L:comple1},
$(1-O(n^{-1}))\,\mathbb{E}[\vert W_n\vert]< \vert W_n\vert$
a.s..
In view of $n^{-1}=o(\wp(\epsilon_n))$ we have therefore proved
eq.~(\ref{E:will1})
\begin{equation*}
(1-(1+o(1))\, \wp(\epsilon_n))\,\vert \Gamma_n\vert \le \vert W_n\vert
\qquad \text{\rm a.s.}
\end{equation*}
and the proof of lemma is complete.
\end{proof}
%%%
%%%%%%%%%%%%%%%%%%%%%%%%%%%%%%%%%%%%%%%%%%%%%%%%%%%%%%%%%%%%%%%%%%%%%%%%%%%
%%%

%%%
%%%%%%%%%%%%%%%%%%%%%%%%%%%%%%%%%%%%%%%%%%%%%%%%%%%%%%%%%%%%%%%%%%%%%%%%%%%%%%
%%%
\section{The main theorem}\label{S:5}
%%%
%%%%%%%%%%%%%%%%%%%%%%%%%%%%%%%%%%%%%%%%%%%%%%%%%%%%%%%%%%%%%%%%%%%%%%%%%%%%%%
%%%

%%%
%%%%%%%%%%%%%%%%%%%%%%%%%%%%%%%%%%%%%%%%%%%%%%%%%%%%%%%%%%%%%%%%%%%%%%%%%%
%%%
We show in this section that the unique giant component forms within
$\Gamma_{n,k}$ for two reasons: first, for given $\delta$, any
$\Gamma_{n,k}$-vertex is {\it a priori} contained in a subcomponent of size
$\ge M_k(n)$, see eq.~(\ref{E:polysize}), limiting the number of ways by
which $\Gamma_{n,k}$-splits
can be chosen and second there are many independent paths connecting large
$\Gamma(S_n,T_n)$-subsets. We first prove Lemma~\ref{L:dense-01} according
to which $\Gamma_{n,k}$ is ``almost'' $2$-dense in $\Gamma(S_n,T_n)$.

%%%
%%%%%%%%%%%%%%%%%%%%%%%%%%%%%%%%%%%%%%%%%%%%%%%%%%%%%%%%%%%%%%%%%%%%%%%%%%
%%%
\begin{lemma}\label{L:dense-01}
Let $k\in \mathbb{N}$ and $\Delta_k=\left[\frac{k}{2(k+1)}\right]^{2}/{2}$, $\lambda_n=\frac{1+\epsilon_n}{n-1}$ where
$\epsilon_n\ge n^{-\frac{1}{3}+\delta}$ for some $\delta>0$ and let furthermore
$A_{\delta}=\left\{v\mid |d(v,2)\cap\Gamma_{n,k}|<\frac{1}{2}\Delta_k \cdot
n^{\delta}\right\}$.
Then $\mathbb{P}(v\in A_{\delta})\le\exp(-\frac{1}{8}\Delta_k\cdot
n^{\delta})$ and there exists some $0<\rho_k<\frac{1}{8}\Delta_k$
for arbitrary but fixed $k$, such that
\begin{equation*}
\vert A_{\delta}\vert\le n!e^{-\rho_k n^{\delta}}\quad\mbox{a.s..}
\end{equation*}
\end{lemma}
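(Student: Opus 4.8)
The plan is to bound $\mathbb{P}(v\in A_\delta)$ by a union/independence argument over the vertices of the ball $\text{\sf B}(v,2)$ and then convert the pointwise tail bound into an a.s.~statement about $\vert A_\delta\vert$ via Markov's inequality. Fix $v$; without loss of generality we may take $v=2$ by vertex-transitivity of the Cayley graph. The vertex boundary $\text{\sf d}(v)$ has size $n-1$, and the set $\text{\sf B}(v,2)\setminus\text{\sf B}(v,1)$ of vertices in distance exactly $2$ has size $\Theta(n^2)$; from this second shell we want to extract a large family of vertices, say $w_1,\dots,w_{T}$ with $T=\Theta(n^2)$, that are pairwise ``independent'' for the purposes of the event ``$w_i$ lies in a $\Gamma_n$-subcomponent of size $\ge c_k n^{k\delta+\frac23}$.'' Concretely, using Lemma~\ref{L:gut}(2) — the labelling $(v_2,\dots,v_n)$ of the generators — one chooses the $w_i$ so that the branching-process subtrees of Lemma~\ref{L:expand2} rooted at the $w_i$ use disjoint blocks of generators and hence are vertex-disjoint, exactly as in the disjointness arguments already carried out in the proof of Lemma~\ref{L:expand2}. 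This is the step I expect to be the main obstacle: one must guarantee simultaneously that (i) each $w_i\in\text{\sf B}(v,2)$, (ii) whether $w_i\in\Gamma_n$ is decided by selecting $w_i$ itself, and (iii) the polynomial-size subcomponents grown from the selected $w_i$'s are mutually disjoint and avoid $v$, so that the $T$ indicator events are genuinely independent. The geometry of $\Gamma(S_n,T_n)$ near $v$, together with the "first block of generators acts trivially on the later labels" bookkeeping from Lemma~\ref{L:gut}, is precisely what makes this possible, but it requires care to set up the indexing.

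Granting such a family, let $\mathbf{1}_i$ be the indicator of the event ``$w_i\in\Gamma_n$ and $w_i$ lies in a $\Gamma_n$-subcomponent of size $\ge c_k n^{k\delta+\frac23}$.'' By Lemma~\ref{L:expand2}, each $\mathbf{1}_i$ has success probability at least $\lambda_n\,\delta_k(\epsilon_n)=\Theta(\epsilon_n n^{-1})$ up to the $\wp$ and $(1-e^{-\beta_k\theta_n})$ factors; more importantly, a single one of these events already forces $w_i\in\text{\sf d}(v,2)\cap\Gamma_{n,k}$ to be non-trivial, and since $\text{\sf d}(v,2)\cap\Gamma_{n,k}$ contains all such $w_i$ together with possible neighbours, we get
\begin{equation*}
\{v\in A_\delta\}\subset\Big\{\textstyle\sum_{i=1}^{T}\mathbf{1}_i < c_k n^{\delta}\Big\}.
\end{equation*}
Because the $\mathbf{1}_i$ are independent with $\mathbb{E}\big[\sum_i\mathbf{1}_i\big]=\Theta(n^2\cdot\epsilon_n n^{-1})=\Theta(\epsilon_n n)\ge\Theta(n^{2/3+\delta})$, the right-hand event is a lower-tail large-deviation event, and the Chernoff bound (\ref{E:cher}) gives $\mathbb{P}\big(\sum_i\mathbf{1}_i<\tfrac12\mathbb{E}[\sum_i\mathbf{1}_i]\big)\le 2e^{-c\,\mathbb{E}[\sum_i\mathbf{1}_i]}\le\exp(-\kappa n^{\delta})$ for a suitable $\kappa>0$, since $\mathbb{E}[\sum_i\mathbf{1}_i]$ dominates $n^{\delta}$ by a polynomial factor. (One must check that $\tfrac12\mathbb{E}[\sum_i\mathbf{1}_i]\ge c_k n^{\delta}$ for large $n$, which holds as $\epsilon_n n\gg n^{\delta}$.) This yields the pointwise bound $\mathbb{P}(v\in A_\delta)\le\exp(-\kappa n^{\delta})$.

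Finally, to pass to the a.s.~statement, write $\vert A_\delta\vert=\sum_{v\in S_n}\mathbf{1}_{\{v\in A_\delta\}}$, so that $\mathbb{E}[\vert A_\delta\vert]\le n!\exp(-\kappa n^{\delta})$ by the previous paragraph. By Markov's inequality, for any $0<\rho<\kappa$,
\begin{equation*}
\mathbb{P}\big(\vert A_\delta\vert > n!\exp(-\rho n^{\delta})\big)\le\frac{\mathbb{E}[\vert A_\delta\vert]}{n!\exp(-\rho n^{\delta})}\le\exp\big(-(\kappa-\rho)n^{\delta}\big)\longrightarrow 0,
\end{equation*}
so $\vert A_\delta\vert\le n!\exp(-\rho n^{\delta})$ a.s., which is the claim. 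The only non-routine ingredient is the construction of the independent family $w_1,\dots,w_T$ inside $\text{\sf B}(v,2)$ with disjoint expansion subtrees; everything downstream is the Chernoff bound plus Markov's inequality.
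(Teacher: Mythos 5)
Your overall strategy matches the paper's: build a large family of distance-$2$ vertices whose Lemma~\ref{L:expand2}-subtrees are pairwise disjoint, observe that $\{v\in A_\delta\}$ is contained in a lower-tail event for the resulting sum of independent indicators, apply Chernoff, then Markov. The downstream Chernoff--Markov part of your argument is correct.

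The gap is precisely where you flag it, and you have the size of the independent family wrong. You claim one can extract $T=\Theta(n^2)$ pairwise-independent distance-$2$ vertices. That is not achievable inside the framework the lemma relies on. Recall from eq.~(\ref{E:part}) the partition $n=z_n+k\mu_n+\ell_n$ with $\ell_n\approx\frac{k}{2(k+1)}n^{2/3}$. The branching process of Lemma~\ref{L:minigene} needs the bulk of the generators --- all $(v_j\,s_j)$ with $j\le n-\tfrac12 n^{2/3}-1$ --- to remain supercritical; the $k$ expansion blocks $A_1,\dots,A_k$ of Lemma~\ref{L:expand2} consume the next $k\mu_n=\Theta(n^{2/3})$ generator labels. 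What is left over for the distance-$2$ shell is only the block $A_{k+1}$ of size $\ell_n=\Theta(n^{2/3})$: the paper forms
$d^{(k+1)}(v,2)=\{v\cdot w_i^{(k+1)}w_j^{(k+1)}\mid 1\le i<j\le \ell_n\}$
using exactly these reserved transpositions, giving $\gtrsim\frac{1}{2!}\binom{\ell_n}{2}=\Theta(n^{4/3})$ (not $\Theta(n^2)$) candidate vertices. The point is that the Lemma~\ref{L:expand2}-subtrees grown from these candidates involve only labels $v_j$ with $j<z_n+k\mu_n$, so distinct $w_i^{(k+1)}w_j^{(k+1)}$ yield vertex-disjoint trees; this is the independence you need, and it is purchased precisely by setting aside $A_{k+1}$. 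Your mean then becomes $\lambda_n\,\delta_k(\epsilon_n)\cdot\Theta(n^{4/3})\sim\Delta_k n^{4/3}\cdot\frac{1+\epsilon_n}{n-1}\cdot\wp(\epsilon_n)\gtrsim\Delta_k n^{\delta}$, not $\Theta(\epsilon_n n)$. That is still $\ge$ a constant times $n^{\delta}$, so the Chernoff bound $\mathbb{P}(Z<\tfrac12\mathbb{E}[Z])\le e^{-\kappa n^\delta}$ and the subsequent Markov step go through unchanged; but if you try to take $T=\Theta(n^2)$ you would need $\Theta(n)$ reserved generator labels, and then the branching process and expansion trees would no longer have room to grow. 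Fixing the count to $\Theta(n^{4/3})$ and tying the $w_i$ explicitly to $A_{k+1}$ closes the gap.
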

%%%
%%%%%%%%%%%%%%%%%%%%%%%%%%%%%%%%%%%%%%%%%%%%%%%%%%%%%%%%%%%%%%%%%%%%%%%%%%
%%%
\begin{proof}
We consider now the action of the transpositions
\begin{equation*}
A_{k+1}=\left\{(v_j^{k+1}\,s_j^{k+1})\in T_n \mid 1\le
j\le\ell_n\right\}
\end{equation*}
where
$w_{j}^{(k+1)}=(v_j^{k+1}\,s_j^{k+1})=(v_{r_n-1+j+k\mu_n}\,s_{r_n-1+j+k\mu_n})$
and $\ell_n=\lfloor\frac{k}{2(k+1)}
n^{\frac{2}{3}}\rfloor$, see eq.~(\ref{E:part})
and set
\begin{equation*}
d^{(k+1)}(v,2)=\{v\cdot w_{i}^{(k+1)}\cdot w_{j}^{(k+1)}\vert 1\le
i<j\le \ell_n\}.
\end{equation*}
We proceed by
establishing a lower bound on the cardinality of $d^{(k+1)}(v,2)$.
Since $T_n$ is a minimal generating set, any sequence of distinct
$T_n$-transpositions is acyclic. Therefore
\begin{eqnarray*}
\vert d^{(k+1)}(v,2) \vert&\ge& \binom{\ell_n}{2}
=\frac{n^{\frac{4}{3}}}{2}\cdot\left[\frac{k}{2(k+1)}\right]^{2}\cdot
(1-o(1)).
\end{eqnarray*}
Let $\Delta_k=\left[\frac{k}{2(k+1)}\right]^{2}/{2}$ and $Z(v)$ be
the r.v. counting the number of vertices contained in the set
$d^{(k+1)}(v,2)\cap\Gamma_{n,k}$, whose subcomponents are
constructed in Lemma~\ref{L:expand2}. We immediately compute
\begin{equation*}
\mathbb{E}(Z(v))\ge \lambda_n\cdot \delta_k(\epsilon_n)\cdot\vert
d^{(k+1)}(v,2) \vert \sim
\Delta_k\,n^{\frac{4}{3}}\cdot\frac{1+\epsilon_n}{n-1}
\cdot\wp(\epsilon_n)(1-e^{-\beta_{k,n}\theta_{n,k}})
\ge \Delta_k\cdot n^{\delta}.
\end{equation*}
The key observation is the following: the construction of the
Lemma~\ref{L:expand2}-subcomponents did not involve any labels
$v_{r_n-1+j+k\mu_n}$, i.e.~any two such subcomponents remain
vertex-disjoint. Therefore the r.v.~$Z(v)$ is a sum of {\it
independent} indicator r.vs.~and Chernoff's large deviation
inequality, eq.~(\ref{E:lde}), \cite{Chernoff:52} implies
\begin{equation}
\mathbb{P}(v\in A_{\delta})=\mathbb{P}\left(
Z(v) <
\frac{1}{2}\,\Delta_k\cdot
n^{\delta}\right)\le \exp(-\frac{1}{8}\Delta_k\cdot n^{\delta}).
\end{equation}
Consequently, the expected number of vertices contained in $A_\delta$ is
bounded by $n!\exp(-\frac{1}{8}\Delta_k\cdot n^{\delta})$.
Now Markov's inequality \cite{Ross},
$$
\mathbb{P}(X>\,t\mathbb{E}(X))\le 1/t, \quad t>0,
$$
guarantees $\vert A_\delta\vert \le n!\cdot e^{-\rho_k n^\delta}$
a.s.~for any $0<\rho_k<\frac{1}{8}\Delta_k$ and arbitrary, fixed $k$
and the lemma follows.
\end{proof}
%%%

Next we show that there exist many vertex disjoint paths between
$\Gamma_{n,k}$-splits of sufficiently large size. The proof is
analogous to Lemma~$7$ in \cite{Reidys:08rand}. We remark that
Lemma~\ref{L:split1} does not use an isoperimetric inequality
\cite{Harper:66b}. It only employs a generic estimate of the vertex
boundary in Cayley graphs due to Aldous \cite{Aldous:87,Babai:91b}.

%%%
%%%%%%%%%%%%%%%%%%%%%%%%%%%%%%%%%%%%%%%%%%%%%%%%%%%%%%%%%%%
%%%
\begin{lemma}\label{L:split1}
Let $(S,T)$ be a vertex-split of $\Gamma_{n,k}$ with the properties
\begin{equation}\label{E:prop}
\exists\,0< \rho_0\le \rho_1<1;\quad (n-2)!\le \vert S\vert = \rho_0
\vert \Gamma_{n,k}\vert  \quad \text{\rm and}\quad (n-2)!\le
\vert T \vert = \rho_1 \vert \Gamma_{n,k}\vert .
\end{equation}
Then there exists some $c>0$ such that a.s.~$d(S)$ is connected to
$d(T)$ in $\Gamma(S_n,T_n)$ via at least
\begin{equation}
c\, (n-5)!/(n-1)^7
\end{equation}
vertex disjoint (independent) paths of length $\le 3$.
\end{lemma}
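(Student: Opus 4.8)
The plan is to count, for each pair $(\sigma,\tau)\in S\times T$, the expected number of connecting paths of length at most $3$ in $\Gamma(S_n,T_n)$, and then to extract a large \emph{vertex-disjoint} subfamily via a standard defect/regularity argument. First I would estimate, for fixed $\sigma\in S$, the size of the vertex boundary $\text{\sf d}(S)$ and the balls $\text{\sf B}(\sigma,1)$, using the fact (Lemma~\ref{L:gut}) that $\Gamma(S_n,T_n)$ is $(n-1)$-regular: every vertex has exactly $n-1$ neighbours, so $\vert\text{\sf B}(\sigma,1)\vert=n$ and $\vert\text{\sf B}(\sigma,2)\vert\ge\binom{n-1}{2}$ (acyclicity of distinct $T_n$-transpositions, already exploited in Lemma~\ref{L:dense-01}, guarantees that the $\binom{n-1}{2}$ two-step endpoints are distinct). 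The generic vertex-boundary estimate for Cayley graphs due to Aldous \cite{Aldous:87,Babai:91b} then gives $\vert\text{\sf d}(S)\vert\ge c\,\vert S\vert/(\text{something polynomial in }n)$, and similarly for $T$; together with the hypothesis $\vert S\vert,\vert T\vert\ge(n-2)!$ this forces $\text{\sf d}(S)$ and $\text{\sf d}(T)$ to be sets of size at least order $(n-3)!/(n-1)^{O(1)}$.

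Next I would count length-$\le 3$ paths from $\text{\sf d}(S)$ to $\text{\sf d}(T)$. The key point is diameter-independent expansion: because $T_n$ generates $S_n$ and every generator is used at most once along an acyclic sequence, for two fixed vertices $u,v$ with $d(u,v)=3$ there is at least one such path, and more usefully, summing over $u\in\text{\sf d}(S)$, $v\in\text{\sf d}(T)$, the number of (ordered) pairs at distance exactly $3$ is at least a constant fraction of $\vert\text{\sf d}(S)\vert\cdot\vert\text{\sf d}(T)\vert$ divided by the number of vertices reachable in $3$ steps, which is $O((n-1)^3)$. This yields on the order of $\frac{(n-3)!^{\,2}/(n-1)^{O(1)}}{n!\,(n-1)^3}\ge c\,(n-4)!/(n-1)^{O(1)}$ internally-disjoint-at-the-ends path candidates; the precise bookkeeping is exactly the computation carried out in \cite[Lemma~7]{Reidys:08rand}, which I would adapt verbatim with $n!$ in place of $2^n$ and $n-1$ in place of $n$, producing the stated bound $c\,(n-5)!/(n-1)^7$.

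Finally, to pass from ``many paths'' to ``many \emph{vertex-disjoint} paths'': each internal vertex of a length-$\le 3$ path lies in at most $O((n-1)^2)$ of the counted paths (it has $\le(n-1)$ choices for each of the two incident edges along the path), so greedily deleting a path together with all paths sharing one of its (at most two) internal vertices removes at most $O((n-1)^2)$ paths per path selected. Hence a disjoint subfamily of size at least (total count)$/O((n-1)^2)$ survives, which is still $c\,(n-5)!/(n-1)^7$ after relabelling the constant. The passage from ``expected number'' to ``a.s.'' uses that these path-endpoint incidences depend only on the deterministic graph $\Gamma(S_n,T_n)$ restricted to the \emph{sets} $S,T\subset\Gamma_{n,k}$, which are themselves a.s.~of the asserted sizes by Lemma~\ref{L:size1}; so no extra concentration is needed beyond what is already established.

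The main obstacle I anticipate is making the Aldous-type vertex-boundary bound quantitatively explicit enough that the polynomial loss is genuinely only $(n-1)^7$ and not worse: one must track carefully how the boundary estimate degrades when $S$ is an arbitrary (not necessarily ``nice'') subset of a component-union of size $\rho_0\vert\Gamma_{n,k}\vert$, and check that intersecting with $\Gamma_{n,k}$ (whose vertices are \emph{a priori} in subcomponents of size $\ge O(n^{k\delta+2/3})$, per the opening remark of Section~\ref{S:5}) does not cost more than a constant factor. This is the step where the structure of $\Gamma_{n,k}$, rather than a soft argument, is actually used.
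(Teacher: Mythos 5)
Your proposal has a genuine gap at the central step — the path-counting argument — and consequently the bound you claim emerges from it is unsupported.

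You assert that the number of ordered pairs $(u,v)\in\text{\sf d}(S)\times\text{\sf d}(T)$ at distance $\le 3$ is ``at least a constant fraction of $\vert\text{\sf d}(S)\vert\cdot\vert\text{\sf d}(T)\vert$ divided by $O((n-1)^3)$.'' There is no reason this should hold, and the quantity does not even make sense dimensionally: each $u\in\text{\sf d}(S)$ sees at most $O(n^3)$ vertices within distance $3$, which is vanishingly small compared with $\vert\text{\sf d}(T)\vert\ge c(n-4)!$, so the set of pairs at distance $\le 3$ is a priori at most $\vert\text{\sf d}(S)\vert\cdot O(n^3)$, and nothing you have said forces it to be nonzero. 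The real difficulty is that $\Gamma_{n,k}$ is a \emph{sparse} random subset of $S_n$ (density roughly $\wp(\epsilon_n)\lambda_n$), so $S$ and $T$ are separated by a large sea of unselected permutations; the Aldous bound gives that $\text{\sf d}(S)$ and $\text{\sf d}(T)$ are individually large, but says nothing about them being close. This is exactly what Lemma~\ref{L:dense-01} is for: $\Gamma_{n,k}$ is a.s.~$2$-dense in all but $n!\,e^{-\rho n^\delta}$ vertices of $\Gamma(S_n,T_n)$. The paper applies this to $\text{\sf d}(\text{\sf B}(S,2))$ (which is large by Aldous plus the diameter bound of Lemma~\ref{L:gut}): almost every vertex in it is within distance $2$ of some $\Gamma_{n,k}$-vertex, and that vertex \emph{cannot} be in $S$ (being at distance $3$ from $S$), hence must be in $T$. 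That is the mechanism by which short $S$-to-$T$ paths are produced, and it is missing from your argument.

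Two smaller points. First, the paper's route to vertex-disjointness is not a greedy deletion argument at internal vertices but rather a selection of a maximal $7$-separated subset $T_1\subset\text{\sf d}(T)$ of endpoints; since $\vert\text{\sf B}(v,7)\vert<(n-1)^7$, one keeps a $1/(n-1)^7$-fraction, and the paths are automatically disjoint because each lives inside $\text{\sf B}(\beta_1,3)$ around its own endpoint. Your greedy version could be made to work but you would need to bound the number of paths through a fixed internal vertex and check that the loss is still only polynomial — this requires more care than the one-line sketch you give. Second, the paper splits into the cases $\vert\text{\sf B}(S,2)\vert\le\frac{2}{3}n!$ and $\vert\text{\sf B}(S,2)\vert>\frac{2}{3}n!$ (the Aldous estimate degenerates when $\vert\text{\sf B}(S,2)\vert$ is close to $n!$, but in that regime $\text{\sf B}(S,2)\cap\text{\sf B}(T,2)$ is a constant fraction of $n!$ directly); your outline omits this case distinction. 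The ingredients you do cite (Aldous, Lemma~\ref{L:gut}, Lemma~\ref{L:size1}) are the right ones in the right places, but without the density input the proof does not close.
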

%%%
%%%%%%%%%%%%%%%%%%%%%%%%%%%%%%%%%%%%%%%%%%%%%%%%%%%%%%%%%%%
%%%
\begin{proof}
We distinguish the cases $\vert \text{\sf B}(S,2)\vert\le
\frac{2}{3} \,n!$ and $\vert\text{\sf B}(S,2)\vert>\frac{2}{3}\,
n!$. In the former case, we employ the generic estimate of vertex
boundaries in Cayley graphs \cite{Aldous:87}
\begin{equation}\label{E:gen}
\vert {\sf d}(S) \vert
\ge \frac{1}{{\rm diam}(\Gamma(S_n,T_n))}\cdot\vert S \vert
\left(1-\frac{\vert S\vert}{n!}\right).
\end{equation}
In view of eq.~(\ref{E:prop}) and Lemma~\ref{L:gut}, eq.~(\ref{E:gen}) implies
\begin{equation}
\exists \,d_1>0;\quad \vert {\sf d}({\sf B}(S,2))\vert
\ge\frac{d_1}{n^2}\cdot\vert{\sf B}(S,2)\vert \ge d_1 \cdot (n-4)!  .
\end{equation}
According to Lemma~\ref{L:dense-01}, a.s.~all but $\le n!\,
e^{-\rho_k n^\delta}$ permutations are within
distance $2$ to some $\Gamma_{n,k}$-vertex, whence
\begin{equation}
\vert \text{\sf d}(\text{\sf B}(S,2))\cap \text{\sf B}(T,2)\vert\ge
d_2\cdot (n-4)! \quad \text{\rm a.s..}
\end{equation}
Let $\beta_2\in \text{\sf d}(\text{\sf B}(S,2))\cap \text{\sf B}(T,2)$. Then
there exists a path $(\alpha_1,\alpha_2,\beta_2)$ such that $\alpha_1\in {\sf
d}(S)$, $\alpha_2\in{\sf d}(B(S,1))$. We distinguish the cases
\begin{equation}
\vert {\sf d}(\text{\sf B}(S,2))\cap {\sf d}(\text{\sf B}(T,1))
\vert\ge d_{2,1} \,(n-4)! \quad \text{\rm and}\quad \vert {\sf
d}(\text{\sf B}(S,2))\cap \text{\sf B}(T,1) \vert\ge d_{2,2}
\,(n-4)!.
\end{equation}
For $\vert {\sf d}(\text{\sf B}(S,2))\cap {\sf d}(\text{\sf B}(T,1))
\vert\ge d_{2,1} \,(n-4)!$, we consider the set
\begin{equation*}
T^*=\{\beta_1 \in \text{\sf d}(T)\mid  d(\beta_1,\beta_2)=1,\text{\rm
for some $\beta_2\in {\sf d}(\text{\sf B}(T,1))$}\}.
\end{equation*}
Evidently, at most $n-1$ elements in $\text{\sf d}(T)$ can be connected to a
fixed $\beta_2$, whence
$$
\vert T^*\vert\ge \frac{1}{2}d_{2,1} \, (n-5)! .
$$
Let $T_1\subset T^*$ be some maximal set such that any pair of $T_1$-vertices
$(\beta_1,\beta_1')$ has at least distance $d(\beta_1,\beta_1')> 6$.
Then $\vert T_1\vert > \vert T^*\vert/(n-1)^7$ since
$\vert \text{\sf B}(v,6)\vert<\sum_{i=1}^6(n-1)^i<(n-1)^7$.
Any two of the paths from $\text{\sf d}(S)$
to $T_1\subset \text{\sf d}(T)$ are of the form
$(\alpha_1,\alpha_2,\beta_2,\beta_1)$ and vertex disjoint since each
of them is contained in $\text{\sf T}(\beta_1,3)$.
Accordingly there are a.s.~at least
\begin{equation}
\frac{1}{2}d_{2,1} \,(n-5)!  /(n-1)^7
\end{equation}
vertex disjoint paths connecting $\text{\sf d}(S)$ and $\text{\sf
d}(T)$. In case of $\vert {\sf d}(\text{\sf B}(S,2))\cap \text{\sf B}(T,1)
\vert\ge \, d_{2,2} \, (n-3)!$ we analogously conclude, that
there exist a.s.~at least
\begin{equation}
d_{2,2} \,(n-4)! /(n-1)^5
\end{equation}
vertex disjoint paths of the form $(\alpha_1,\alpha_2,\beta_2)$
connecting $\text{\sf d}(S)$ and $\text{\sf d}(T)$.\\
It remains to consider the case $\vert \text{\sf B}(S,2)\vert >
\frac{2}{3}\cdot n!$. By construction both: $S$ and $T$ satisfy
eq.~(\ref{E:prop}), whence we can, without loss of
generality assume that also $\vert \text{\sf B}(S,2)\vert >
\frac{2}{3}\cdot n!$ holds. But then
$$
\vert \text{\sf B}(S,2)\cap \text{\sf B}(T,2)\vert > \frac{1}{3}\,n!
$$
and for each $\alpha_2\in\text{\sf B}(S,2)\cap \text{\sf B}(T,2)$ we
select $\alpha_1\in \text{\sf d}(S)$ and $\beta_1\in \text{\sf
d}(T)$.
We derive in analogy to the previous arguments that there
exist a.s.~at least
\begin{equation}
d_2 \, (n-2)! /(n-1)^5
\end{equation}
pairwise vertex disjoint paths of the form
$(\alpha_1,\alpha_2,\beta_1)$ and the proof of the lemma is
complete.
\end{proof}

%%%
%%%%%%%%%%%%%%%%%%%%%%%%%%%%%%%%%%%%%%%%%%%%%%%%%%%%%%%%%%%%%%%%%%%%%%%%%%%%%%%
%%%

{\bf Proof of Theorem~\ref{T:main}.}
To prove the theorem we employ an argument due to Ajtai {\it et al.}
\cite{Ajtai:82} originally used for $n$-cubes and independent edge-selection.
We proceed along the lines of \cite{Reidys:08rand} and select the
$\Gamma(S_n,T_n)$-vertices in two distinct randomizations.\\
Let $x_1,x_2>1$
such that $\frac{1}{x_1}+\frac{1}{x_2}=1$. First we select with
probability $\frac{1+\epsilon_n/x_1}{n}$ and second with probability
$\frac{\epsilon_n}{x_2 \cdot n}$. The probability of not being
chosen in both rounds is given by
\begin{equation*}
\left(1-\frac{1+\epsilon_n/x_1}{n}\right)
\left(1-\frac{\epsilon_n}{x_2\cdot n}\right)\ge
1-\frac{1+\epsilon_n}{n},
\end{equation*}
whence it suffices to prove that after the second randomization there
exists a giant component with the property $\vert C_{n}^{(1)}\vert\sim
\vert\Gamma_{n,k}\vert$.\\
After the first randomization each $\Gamma(S_n,T_n)$-vertex has been
selected with probability $\frac{1+\epsilon_n/x_1}{n}$ and according
to Lemma~\ref{L:size1}, we have
\begin{equation}
\vert \Gamma_{n,k}(x_1)\vert\sim \wp(\epsilon_n/x_1)\,
\vert\Gamma_n(x_1)\vert \quad \text{\rm a.s.,}
\end{equation}
where $\Gamma_n(x_1)\subset \Gamma_n$. Suppose
$\Gamma_{n,k}(x_1)$ contains a ``large'' component, $S$. To be precise
a component $S$ of size
$$
(n-2)! \le \vert S\vert \le (1-b)\, \vert \Gamma_{n,k}(x_1)\vert,
\quad \text{\rm where } b>0.
$$
Then there exists a split of $\Gamma_{n,k}(x_1)$, $(S,T)$,
satisfying the assumptions of Lemma~\ref{L:split1}.
We observe that Lemma~\ref{L:expand2} limits the
number of ways these splits can be constructed. Recall
(eq.~(\ref{E:polysize}))
$$
M_k(n) = \frac{1}{2^{k+2}}\cdot\left[\frac{1}{4k(k+1)}\right]^k
        \cdot n^{\frac{2}{3}+k\delta}.
$$
Obviously, there are at most $2^{n!/M_k(n)}$
ways to select $S$ of such a split.
Now we employ Lemma~\ref{L:split1}. In view of $(n-2)!\le \vert S\vert$,
Lemma~\ref{L:split1} implies that there exists some
$c>0$ such that a.s.~$d(S)$ is connected to $d(T)$ in $\Gamma(S_n,T_n)$
via at least $c\cdot n!/n^{12}\le c\cdot\vert S \vert/n^{10}$ vertex disjoint
paths of length $\le 3$.\\
We next perform the second randomization and select
$\Gamma(S_n,T_n)$-vertices with probability
$\frac{\epsilon_n/x_2}{n}$. None of the above $c\cdot\vert S
\vert/n^{10}$ paths can be selected during this process. Since any two
paths are vertex disjoint the expected number of such splits is, by
linearity of expectation, less than
\begin{equation}
2^{n!/M_k(n)} (1-(\epsilon_n/x_2n)^4)^{\frac{c\cdot n!}{n^{12}}}
\le 2^{n!/M_k(n)} e^{-c' n!/n^{16}}
\quad\text{for some $c,c'>0$}.
\end{equation}
Accordingly, choosing $k$ sufficiently large the expected number of these
$\Gamma_{n,k}(x_1)$-splits tends to zero, i.e.~for any $k\ge k_0\in
\mathbb{N}$ there exists a.s.~no two component split $(S,T)$ of
$\Gamma_{n,k}(x_1)$ with the property
$\rho_0\vert\Gamma_{n,k}(x_1)\vert=\vert S\vert\le
\vert T\vert$ .
Consequently, there exists some subcomponent $C_n(x_1)$
with the property
\begin{equation*}
\vert C_n(x_1) \vert=\vert \Gamma_{n,k}(x_1)\vert \sim
\wp(\epsilon_n/x_1)\, \vert \Gamma(x_1)\vert \quad
\text{\rm a.s.,}
\end{equation*}
obtained by the merging of the subcomponents of size
$\ge M_k(n)$ generated during the first randomization
via the paths selected during the second. Since $\wp(\epsilon_n/x_1)$ is
continuous in the parameter $\epsilon_n/x_1$, see eq.~(\ref{E:it}),
we derive, for $x_1$ tending to $1$
\begin{equation}
\vert C_n^{(1)} \vert=\lim_{x_1\to 1}\vert C_n(x_1) \vert \sim \wp(\epsilon_n)
\vert \Gamma_n\vert \quad \text{\rm
a.s.}
\end{equation}
It remains to prove uniqueness.
Any other largest component, $\tilde{C}_n$, is necessarily contained in
$\Gamma_{n,k}$. However, we have just proved $\vert C_n^{(1)} \vert
\sim \wp(\epsilon_n)\vert \Gamma_n\vert$ and according to
Lemma~\ref{L:size1},
$\wp(\epsilon_n)\vert \Gamma_n\vert\sim \vert \Gamma_{n,k}\vert$.
Therefore
$\vert\tilde{C}_n\vert=o(\vert C_n^{(1)}\vert)$, whence $C_n^{(1)}$
is unique.\hfil  $\square$
%%%
%%%%%%%%%%%%%%%%%%%%%%%%%%%%%%%%%%%%%%%%%%%%%%%%%%%%%%%%%%%%%%%%%
%%%

%%%
%%%%%%%%%%%%%%%%%%%%%%%%%%%%%%%%%%%%%%%%%%%%%%%%%%%%%%%%%%%%%%%%%
%%%

%%%%%
%%%%%%%%%%%%%%%%%%%%%%%%%%%%%%%%%%%%%%%%%%%%%%%%%%%%%%%%%%%%%%%
%%%%%

{\bf Acknowledgments.}
Emma Y.~Jin would like to thank the Alexander von Humboldt Foundation for
their support. We are grateful to Fenix W.D.~Huang and Rita R.~Wang for
their help.
%%%
%%%%%%%%%%%%%%%%%%%%%%%%%%%%%%%%%%%%%%%%%%%%%%%%%%%%%%%%%%%%%%%%%
%%%

\end{document}